\setlist[enumerate]{topsep=0pt,label=\textup{(\roman*)},leftmargin=\parindent,labelsep=.5em}
\setlist{noitemsep}
\declaretheoremstyle[
  spaceabove=\topsep, spacebelow=6pt,
  headfont=\normalfont\bfseries,
  notefont=\mdseries, notebraces={(}{)},
  bodyfont=\normalfont\itshape,
  postheadspace=1em,
  qed=\qedsymbol
]{mystyle}
\declaretheoremstyle[
  spaceabove=\topsep, spacebelow=6pt,
  headfont=\normalfont\bfseries,
  notefont=\mdseries, notebraces={(}{)},
bodyfont=\normalfont,
  postheadspace=1em,
  qed=\qedsymbol
]{mydefstyle}
\theoremstyle{mystyle}
\declaretheorem[numberlike=subsection]{proposition}
\declaretheorem[numberlike=subsection]{theorem}
\declaretheorem[numberlike=subsection]{corollary}
\declaretheorem[numberlike=subsection]{lemma}
\declaretheorem[numberlike=subsection,name=Mumford-Tate Conjecture]{mtc}
\declaretheorem[numberlike=subsection,name=Integral Mumford-Tate Conjecture]{imtc}
\declaretheorem[numberlike=subsection,name=Adelic Mumford-Tate Conjecture]{amtc}
\declaretheorem[numbered=no,name=Theorem A]{MainA}
\declaretheorem[numbered=no,name=Theorem B]{MainB}
\theoremstyle{mydefstyle}
\declaretheorem[numberlike=subsection]{definition}
\declaretheorem[numberlike=subsection]{example}
\declaretheorem[numberlike=subsection]{remark}
\declaretheorem[numberlike=subsection]{remarks}
\numberwithin{equation}{subsection}
\titleformat{\section}[block]
  {\filcenter\normalfont\large\bfseries}{\thesection.}{1em}{}
\titleformat{\subsection}[runin]
  {\normalfont\bfseries}{\thesubsection}{.5em}{}
\titlespacing*{\section} {0pt}{6ex plus 1ex minus .2ex}{3 ex plus .2ex}
\titlespacing*{\subsection} {0pt}{\topsep}{1em}
\newcommand{\bbA}{\mathbb{A}}
\newcommand{\bbC}{\mathbb{C}}
\newcommand{\bbF}{\mathbb{F}}
\newcommand{\bbG}{\mathbb{G}}
\newcommand{\bbH}{\mathbb{H}}
\newcommand{\bbQ}{\mathbb{Q}}
\newcommand{\bbR}{\mathbb{R}}
\newcommand{\bbS}{\mathbb{S}}
\newcommand{\bbZ}{\mathbb{Z}}
\newcommand{\cA}{\mathscr{A}}
\newcommand{\cC}{\mathscr{C}}
\newcommand{\cF}{\mathscr{F}}
\newcommand{\cG}{\mathscr{G}}
\newcommand{\cS}{\mathscr{S}}
\newcommand{\Betti}{\mathrm{B}}
\newcommand{\Coker}{\mathrm{Coker}}
\newcommand{\CSp}{\mathrm{GSp}}
\newcommand{\CSpin}{\mathrm{CSpin}}
\newcommand{\End}{\mathrm{End}}
\newcommand{\Gal}{\mathrm{Gal}}
\newcommand{\GL}{\mathrm{GL}}
\newcommand{\GO}{\mathrm{GO}}
\newcommand{\Image}{\mathrm{Im}}
\newcommand{\Inn}{\mathrm{Inn}}
\newcommand{\Ker}{\mathrm{Ker}}
\newcommand{\Norm}{\mathrm{Norm}}
\newcommand{\Res}{\mathrm{Res}}
\newcommand{\Sh}{\mathrm{Sh}}
\newcommand{\SO}{\mathrm{SO}}
\newcommand{\Spec}{\mathrm{Spec}}
\newcommand{\UU}{\mathrm{U}}
\newcommand{\ab}{\mathrm{ab}}
\newcommand{\ad}{\mathrm{ad}}
\newcommand{\conn}{\mathrm{conn}}
\newcommand{\der}{\mathrm{der}}
\newcommand{\diag}{\mathrm{diag}}
\newcommand{\hgen}{\mathrm{Hgen}}
\newcommand{\id}{\mathrm{id}}
\newcommand{\mult}{\mathrm{m}}
\newcommand{\pr}{\mathrm{pr}}
\newcommand{\prim}{\mathrm{prim}}
\newcommand{\rec}{\mathrm{rec}}
\newcommand{\trace}{\mathrm{trace}}
\newcommand{\Af}{\bbA_\mathit{f}}
\newcommand{\kbar}{{\bar{k}}}
\newcommand{\Qbar}{{\overline{\bbQ}}}
\newcommand{\Zhat}{\hat{\bbZ}}
\newcommand{\QHS}{{\bbQ\mathsf{HS}}}
\newcommand{\isomarrow}{\xrightarrow{\sim}}
\let\epsilon=\varepsilon
\begin{document}

\begin{center}
\textbf{\Large Integral and adelic aspects of the Mumford-Tate conjecture}
\bigskip

\textit{by}
\bigskip

{\Large Anna Cadoret and Ben Moonen}
\end{center}
\vspace{1cm}

{\small 

\noindent
\begin{quoting}
\textbf{Abstract.} Let $Y$ be an abelian variety over a subfield $k \subset \bbC$ that is of finite type over~$\bbQ$. We prove that if the Mumford-Tate conjecture for~$Y$ is true, then also some refined integral and adelic conjectures due to Serre are true for~$Y$. In particular, if a certain Hodge-maximality condition is satisfied, we obtain an adelic open image theorem for the Galois representation on the (full) Tate module of~$Y$. Our second main result is an (unconditional) adelic open image theorem for K3 surfaces. The proofs of these results rely on the study of a natural representation of the fundamental group of a Shimura variety.   
\medskip

\noindent
\textit{AMS 2010 Mathematics Subject Classification:\/} 11G18, 14G35
\end{quoting}

} 
\vspace{1cm}

\section*{Introduction}

This paper has two main results, one about abelian varieties, the other about K3 surfaces. The underlying mechanism is the same in both cases.

To explain the results, let $Y$ be either an abelian variety or a K3 surface over a subfield $k \subset \bbC$ that is finitely generated over~$\bbQ$. Write $H = H_1\bigl(Y(\bbC),\bbZ\bigr)$ in the first case and $H = H^2\bigl(Y(\bbC),\bbZ\bigr)(1)$ if $Y$ is a K3 surface. Let $G_\Betti \subset \GL(H)$ be the Mumford-Tate group. We may identify $H \otimes \Zhat$ with the \'etale cohomology of~$Y$ with $\Zhat$-coefficients ($H_1$ or $H^2(1)$); this gives us a Galois representation
\[
\rho_Y \colon \Gal(\kbar/k) \to \GL(H)\bigl(\Zhat\bigr)\, .
\]
It is known that, possibly after replacing~$k$ with a finite extension, the image of~$\rho_Y$ is contained in~$G_\Betti(\Zhat)$. {}From now on, we assume this is the case. If $\ell$ is a prime number, let $\rho_{Y,\ell} \colon \Gal(\kbar/k) \to \GL(H)\bigl(\bbZ_\ell\bigr)$ be the $\ell$-primary component of~$\rho_Y$.

In the case of an abelian variety, our main result says that the usual Mumford-Tate conjecture implies an integral refinement of it. If moreover a certain maximality condition is satisfied (see Section~\ref{sec:HmaxAMTC}), we obtain an adelic open image result for~$\rho_Y$. These refined statements were conjectured by Serre. The precise result is as follows.

\begin{MainA}
Let $Y$ be an abelian variety over~$k$ for which the Mumford-Tate conjecture is true.

\textup{(\romannumeral1)} The index $\bigl[G_\Betti(\bbZ_\ell):\Image(\rho_{Y,\ell})\bigr]$ is bounded when $\ell$ varies. Moreover, for almost all~$\ell$ the image of~$\rho_{Y,\ell}$ contains the commutator subgroup of~$G_\Betti(\bbZ_\ell)$, as well as the integral homotheties $\bbZ_\ell^* \cdot \id$.

\textup{(\romannumeral2)} If the Hodge structure $H_1\bigl(Y(\bbC),\bbQ\bigr)$ is Hodge-maximal (see \textup{Def.~\ref{def:HodgeMax}}), the image of~$\rho_Y$ is an open subgroup of~$G_\Betti(\Af)$. 
\end{MainA}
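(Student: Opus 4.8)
The plan is to realise $\rho_Y$ through the \'etale fundamental group of the Shimura variety attached to the Mumford--Tate group, and then to disentangle the geometric monodromy from the arithmetic (Galois) part. Using a polarisation, embed $G:=G_\Betti\hookrightarrow\CSp(H,\psi)\hookrightarrow\GL(H)$, and let $(G,X)$ be the Shimura datum in which $X$ is the $G(\bbR)$-conjugacy class of the Hodge cocharacter of $H$; it is of Hodge type, so $\Sh_K(G,X)$ has a canonical model over a number field and embeds into a Siegel modular variety. After a finite extension of $k$ — harmless both for the boundedness in~(\romannumeral1) and for the openness in~(\romannumeral2) — the polarised $Y$ equipped with a full level structure defines a point of the Siegel variety lying on $\Sh_K(G,X)$ and fixed by $\Gal(\kbar/k)$, hence a $k$-point $y\in\Sh_K(G,X)(k)$ for a suitable neat $K\subset G(\Af)$; under this identification $\rho_Y$ is, up to $G(\Af)$-conjugacy and a choice of lattices, the action of $\Gal(\kbar/k)$ on the fibre over $y$ of the pro-\'etale tower $\varprojlim_{K'}\Sh_{K'}(G,X)\to\Sh_K(G,X)$. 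Fixing a connected component $\Sh^\circ\ni y$, geometrically connected over $k$ after a further extension, the tower gives a monodromy representation $\mu\colon\pi_1(\Sh^\circ,\bar y)\to G(\Af)$ whose image lies in $\prod_\ell\mathcal G(\bbZ_\ell)$ for a suitable reductive model $\mathcal G$ of $G$; writing $s_y$ for the section of $\pi_1(\Sh^\circ,\bar y)\twoheadrightarrow\Gal(\kbar/k)$ attached to $y$, one has $\Image(\rho_Y)=\mu\bigl(s_y\Gal(\kbar/k)\bigr)$.

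\emph{Proof of~(\romannumeral1).} For each $\ell$, the Mumford--Tate conjecture identifies the identity component of the Zariski closure of $\Image(\rho_{Y,\ell})$ with $G\otimes\bbQ_\ell$, and Bogomolov's theorem on the $\ell$-adic monodromy of abelian varieties then makes $\Image(\rho_{Y,\ell})$ open in $G(\bbQ_\ell)$, hence of finite index in $\mathcal G(\bbZ_\ell)$. To bound the index uniformly and to obtain the statement on commutators I would bring in the \emph{geometric} monodromy: $\mu$ restricted to $\pi_1(\Sh^\circ\otimes_k\kbar)$ is the closure of an arithmetic subgroup of $G^\der(\bbQ)$, and by strong approximation for the simply connected cover $\mathcal G^{\mathrm{sc}}$ of $\mathcal G^\der$ — valid since the adjoint group of a Mumford--Tate group has no compact $\bbR$-factors — this closure contains the image of $\mathcal G^{\mathrm{sc}}(\bbZ_\ell)$ for all $\ell$ outside a finite set. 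Combining this with a Nori/Larsen--Pink analysis of the mod-$\ell$ images of $\rho_{Y,\ell}$ and with the $\ell$-adic fullness just established — the identity $\Image(\rho_Y)\cdot\mu\bigl(\pi_1(\Sh^\circ\otimes\kbar)\bigr)=\mu\bigl(\pi_1(\Sh^\circ)\bigr)$ keeps the ``abelian defect'' of $\Image(\rho_{Y,\ell})$ in $\mathcal G(\bbZ_\ell)$ under control — one gets, for almost all $\ell$, that $\Image(\rho_{Y,\ell})$ contains the image of $\mathcal G^{\mathrm{sc}}(\bbZ_\ell)$ in $\mathcal G(\bbZ_\ell)$, which for large $\ell$ is precisely the commutator subgroup of $\mathcal G(\bbZ_\ell)$. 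The remaining part lives in the central torus $G/G^\der$; there $\Image(\rho_{Y,\ell})$ is governed by reciprocity on the associated zero-dimensional Shimura datum together with Bogomolov's theorem for tori, whence a uniform bound on the index and, for almost all $\ell$, the scalars $\bbZ_\ell^*\cdot\id$ inside the image.

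\emph{Proof of~(\romannumeral2) and the main obstacle.} By~(\romannumeral1) the group $\Image(\rho_Y)$ is open in $\prod_{\ell\in S}\mathcal G(\bbZ_\ell)$ for the finite exceptional set $S$, and for $\ell\notin S$ it contains the commutator subgroup of $\mathcal G(\bbZ_\ell)$; to deduce openness in $G_\Betti(\Af)$ it is enough, by an independence argument of Goursat/Ribet type using that for $\ell\gg0$ the groups $\mathcal G(\bbF_\ell)$ admit no small and no common non-abelian quotients, to know in addition that $\Image(\rho_Y)$ surjects onto the \emph{full} $\mathcal G(\bbF_\ell)$ — rather than onto a subgroup of uniformly bounded but possibly non-trivial index — for almost all $\ell$, compatibly with the exceptional primes. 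This is the crux, and it is precisely here that Hodge-maximality of $H_1\bigl(Y(\bbC),\bbQ\bigr)$ enters: it excludes a coherent, index-$>1$ behaviour spread over infinitely many primes in the sliver of $\mathcal G(\bbZ_\ell)$ lying between its commutator subgroup and the scalars — a parity-type obstruction supported on $\pi_1(\mathcal G^\der)$ and on the two-torsion of the similitude character — and so lets the assembly go through. The two genuine difficulties are thus (a) the uniform passage in~(\romannumeral1) from the geometric monodromy of the Shimura variety to $\Image(\rho_{Y,\ell})$ together with its central part, and (b) the verification in~(\romannumeral2) that Hodge-maximality annihilates the residual obstruction; the reduction to the Shimura variety, the use of Bogomolov's theorem and of strong approximation, and the final adelic assembly are comparatively formal.
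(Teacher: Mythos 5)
Your framework is the right one — realise $\rho_Y$ through the arithmetic monodromy of the Shimura variety attached to $G_\Betti$ — but your argument for (\romannumeral1) has a genuine gap at the crucial step. Bogomolov's theorem together with the Mumford--Tate conjecture gives that $\Image(\rho_{Y,\ell})$ is open in $G_\Betti(\bbZ_\ell)$ \emph{for each $\ell$ separately}; the whole difficulty is to make the index bounded uniformly in $\ell$. The identity $\Image(\rho_Y)\cdot\mu\bigl(\pi_1(\Sh^\circ\otimes_k\kbar)\bigr)=\mu\bigl(\pi_1(\Sh^\circ)\bigr)$ is correct, but it only pins down the image of $\Image(\rho_Y)$ in the abelian quotient by the geometric monodromy; it says nothing about the intersection $\Image(\rho_{Y,\ell})\cap\mu_\ell\bigl(\pi_1(\Sh^\circ\otimes_k\kbar)\bigr)$, so strong approximation applied to the geometric monodromy does not transfer to $\Image(\rho_{Y,\ell})$, and a Nori/Larsen--Pink analysis of mod-$\ell$ images is a one-prime-at-a-time statement (and, as Hui--Larsen show, at best controls the derived part). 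What is missing is the theorem of Cadoret--Kret (Theorem~\ref{thm:CadKret}): for a Shimura variety of abelian type, $\ell$-Galois-genericity at a single prime implies adelic Galois-genericity, i.e.\ $\Image(\rho_Y)$ is open in $\Image(\phi)$. That result rests on Cadoret's adelic open image theorem for abelian schemes, not on Bogomolov, strong approximation, or Nori, and it is the engine of the whole proof. Once it is in hand, both parts of the theorem follow from the unconditional structure theorem for $\Image(\phi)$ itself (Corollary~\ref{cor:Im(phi)}), which is established via Deligne's reciprocity law for connected components of $\Sh(G,X)$ and Platonov--Rapinchuk.

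For (\romannumeral2), your account of what Hodge-maximality does is not accurate, and your assembly argument cannot reach the conclusion. Hodge-maximality is not a parity obstruction on the similitude character; it is the condition that the Hodge cocharacter class $[\cC]$ generates Borovoi's $\pi_1(G_\Betti)$ as a $\bbZ[\Gal(\Qbar/\bbQ)]$-module, equivalently that $(G_\Betti,X)$ is a maximal Shimura datum. Its role is to force the cokernel of the reciprocity map $\rec\colon\Gal(E^{\ab}/E)\to\bar\pi_0\pi(G_\Betti)$ to be a finite discrete group rather than merely a group of finite exponent (Theorem~\ref{thm:Coker(rec)}); that is precisely what upgrades the statements about $\Image(\phi_\ell)$ to openness of $\Image(\phi)$ in $G_\Betti(\Af)$. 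A Goursat/Ribet gluing of the mod-$\ell$ pieces cannot substitute for this: Remark~\ref{rem:HMaxWintenb} shows that if $h$ lifts along a non-trivial isogeny $M'\to G_\Betti$, then by Wintenberger the $\rho_{Y,\ell}$ lift compatibly to $M'$, and since $M'(\Af)\to G_\Betti(\Af)$ does not have open image, $\Image(\rho_Y)$ fails to be open in $G_\Betti(\Af)$ — even though each $\Image(\rho_{Y,\ell})$ may be open in $G_\Betti(\bbZ_\ell)$ with bounded index. This adelic obstruction is invisible prime by prime, and ruling it out requires maximality exactly as it enters the cokernel computation, not as an input to a gluing lemma.
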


By a result of Larsen and Pink (see \cite{LaPi1995}, Thm.~4.3), if the Mumford-Tate conjecture for an abelian variety is true for one prime number~$\ell$, it is true for all~$\ell$. The assumption that the Mumford-Tate conjecture for~$Y$ is true is therefore unambiguous. Let us also note that Hodge-maximality is a necessary condition for the image of~$\rho_Y$ to be open in~$G_\Betti(\Af)$; see Remark~\ref{rem:HMaxWintenb}.

In the case of a K3 surface, the Mumford-Tate conjecture is known (due to Tankeev~\cite{TankK3} and, independently, Andr\'e~\cite{AndreK3}), and we prove that the Hodge-maximality assumption is always satisfied. In this case we obtain the following adelic open image theorem.

\begin{MainB}
If $Y$ is a K3 surface, the image of~$\rho_Y$ is an open subgroup of~$G_\Betti(\Af)$.
\end{MainB}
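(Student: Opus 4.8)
The plan is to deduce Theorem~B from Theorem~A(ii), so the crux is to verify that the hypotheses of Theorem~A(ii) hold for a K3 surface~$Y$ — namely, that the Mumford-Tate conjecture is true and that the relevant Hodge structure is Hodge-maximal — and then to transport the conclusion from the abelian-variety setting to the K3 setting via the Kuga-Satake construction. For the first point, the Mumford-Tate conjecture for K3 surfaces is a theorem of Tankeev and André, which we may cite. For the second, I would show directly that the Hodge structure $H = H^2\bigl(Y(\bbC),\bbZ\bigr)(1)$, or rather its transcendental part, is always Hodge-maximal in the sense of Definition~\ref{def:HodgeMax}; this should follow from the explicit description of the Mumford-Tate group of a K3-type Hodge structure (it is $\SO$ or $\CSpin$ of the quadratic space, or a product involving such a factor when there is extra endomorphism structure), together with the fact that the Hodge cocharacter is minuscule, so no proper lift of the connected monodromy/Mumford-Tate group to an isogenous cover can accommodate the Hodge structure.

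The mechanism then runs through the Kuga-Satake abelian variety $A = \mathrm{KS}(Y)$. Attached to~$Y$ is a polarized K3-type Hodge structure, and the Clifford algebra construction produces an abelian variety~$A$ (after a finite extension of~$k$, which is harmless) together with a Hodge-theoretic embedding identifying the $\CSpin$-group acting on $H_1\bigl(A(\bbC),\bbQ\bigr)$ with a group closely related to $G_\Betti(H)$; concretely, $H$ is realized inside $\End$ of the $H_1$ of~$A$ as a sub-Hodge-structure, and the two Mumford-Tate groups have the same derived group and differ only by central, well-understood factors. The adelic open image statement for~$\rho_A$ furnished by Theorem~A — applicable because $A$ satisfies MTC (it is an abelian variety and its Hodge structure is of a type for which MTC is known in this situation, or one reduces to Theorem~A's hypothesis via the K3 case of MTC) and because the Hodge-maximality of~$H$ forces Hodge-maximality of $H_1(A)$ — then descends to an adelic open image statement for~$\rho_Y$, since the étale realization of the Kuga-Satake correspondence is defined over~$k$ (after the finite extension) and is compatible with the Galois action. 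One must check that passing from $A$ to $Y$ does not lose openness: the map $G_\Betti(H_1(A))(\Af) \to G_\Betti(H)(\Af)$ induced by the correspondence is, up to isogeny, surjective with central kernel, and the image of an open subgroup under such a map is open.

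The main obstacle, I expect, is the bookkeeping around the centers: the Kuga-Satake construction naturally relates the spin/$\CSpin$ groups, but $G_\Betti(H)$ for a K3 surface can be $\SO$ rather than $\CSpin$, and when $\End$ of the transcendental Hodge structure is a CM or totally real field of positive dimension the Mumford-Tate group is a proper subgroup of the full orthogonal group, cut out by that endomorphism algebra. One must therefore carry out the argument uniformly across all the possible shapes of $G_\Betti(H)$ (the classification going back to Zarhin), checking in each case that Hodge-maximality holds and that the adelic image for~$A$ descends to an open adelic image for~$Y$; the homothety factor $\bbZ_\ell^* \cdot \id$ and the commutator subgroup from Theorem~A(i) feed into controlling the central part, while the semisimple part is handled by the $\SO$/$\CSpin$ comparison. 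A secondary technical point is ensuring all relevant objects and the Kuga-Satake correspondence are defined over a common finite extension of~$k$, so that the integral/adelic statements — which are insensitive to such extensions — apply.
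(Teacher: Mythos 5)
Your overall strategy is genuinely different from the paper's, and it also has a gap that would need to be filled. The paper does \emph{not} route Theorem~B through the Kuga--Satake abelian variety and Theorem~A. Instead it works directly with the orthogonal Shimura datum $(\SO_\bbQ,\Omega^\pm)$: it uses Rizov's moduli space $\cF_{2d,K}$ of polarized K3 surfaces, Rizov's \'etale period map $j_K\colon \cF_{2d,K}\to \Sh_K(\SO_\bbQ,\Omega^\pm)$, and the compatibility of diagram~\eqref{eq:FtoSh}, to identify $\rho_{Y,\prim}$ with $\phi_{\bar t}\circ\sigma_{t_0}$. Then, exactly as in the proof of Theorem~\ref{thm:MainThmAV}, it restricts to the sub-Shimura datum $(G_\Betti,X)\hookrightarrow(\SO_\bbQ,\Omega^\pm)$, applies the Mumford--Tate conjecture for K3 surfaces to see that $s_0$ is $\ell$-Galois-generic, invokes Theorem~\ref{thm:CadKret} (for which the only role of Kuga--Satake is that $(\SO_\bbQ,\Omega^\pm)$ is of abelian type), and concludes by Corollary~\ref{cor:Im(phi)}(\romannumeral3) together with Proposition~\ref{prop:K3Maximal}. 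This is cleaner than the Kuga--Satake detour because the bookkeeping with centres, the $\CSpin\to\SO$ comparison and the Galois-equivariance of the Kuga--Satake correspondence over a common base field are all avoided. Your sketch of Hodge-maximality (the minuscule-cocharacter computation through Zarhin's classification of $\End_\QHS$) is essentially the content of Proposition~\ref{prop:K3Maximal}, and your citation of Tankeev--Andr\'e is the same as the paper's.

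The concrete gap is the assertion that \emph{Hodge-maximality of $H$ forces Hodge-maximality of $H_1(A)$} for the Kuga--Satake abelian variety~$A$. This does not follow formally. Hodge-maximality of a Hodge structure is a statement about whether the Hodge cocharacter class generates $\pi_1(M)$ of the Mumford--Tate group $M$ as a $\Gal(\Qbar/\bbQ)$-module (\ref{ssec:pi1Borov}, \ref{def:HodgeMax}). For $H$ (generic case) we have $M\cong\SO$ with $\pi_1(\SO)\cong\bbZ/2\bbZ$, while for $H_1(A)$ the Mumford--Tate group is essentially $\CSpin$, whose derived group $\mathrm{Spin}$ is simply connected so that $\pi_1(\CSpin)\cong X_*(\CSpin^\ab)$ is a free $\bbZ$-module. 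The surjection $\CSpin\to\SO$ induces a map on $\pi_1$'s that is not an isomorphism, so the fact that $[\cC(\SO,\Omega^\pm)]$ generates $\pi_1(\SO)$ says nothing about whether the lifted cocharacter class generates $\pi_1(\CSpin)$. To make your route work you would need a separate, direct argument for Hodge-maximality of $H_1(A)$ in each of Zarhin's cases (trivial, totally real, CM endomorphisms), which is comparable in effort to, and not subsumed by, the verification already required for~$H$. In addition, the application of Theorem~A to~$A$ presupposes the Mumford--Tate conjecture for~$A$ — this is indeed available (it is essentially part of the Tankeev/Andr\'e argument), but should be cited explicitly rather than folded into a parenthetical remark — and the descent of openness along $\CSpin(\Af)\to\SO(\Af)$ does hold (by Remark~\ref{rem:PlatRapin} and openness of continuous surjections of $\sigma$-compact locally compact groups) but again must be spelled out uniformly over all shapes of $G_\Betti$.
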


The proofs of these theorems rely on the fact that the moduli spaces of abelian varieties and K3 surfaces are (essentially) Shimura varieties. 

In Section~\ref{sec:AdelicRep} we recall how to associate to a Shimura datum $(G,X)$ and a neat compact open subgroup $K_0 \subset G(\Af)$ a representation $\phi \colon \pi_1(S_0) \to K_0 \subset G(\Af)$, where $S_0 \subset \Sh_{K_0}(G,X)$ (over some number field~$F$) is a geometrically irreducible component. This reflects the fact that we have a tower of finite \'etale covers $\Sh_K(G,X) \to \Sh_{K_0}(G,X)$ indexed by the open subgroups $K \subset K_0$. For $\ell$ a prime number, let $\phi_\ell \colon \pi_1(S_0) \to K_{0,\ell}$ be the $\ell$-primary component of~$\phi$.

The main technical result we prove, as Corollary~\ref{cor:Im(phi)}, is that, under a mild assumption on the datum $(G,X)$, the image of the adelic representation~$\phi$ is ``big'' in precisely the sense as in the conclusions of the main theorems. The proof of this result only involves abstract theory of Shimura varieties; it relies on Deligne's group-theoretic description of the reciprocity law that gives the Galois action on the geometric connected components of $\Sh(G,X)$.

To understand how this result leads to our main theorems, we have to link the representation~$\phi$ to the Galois representation~$\rho_Y$ on the cohomology of~$Y$. Taking $G$ to be the Mumford-Tate group of~$Y$, we obtain a Shimura variety that has an interpretation as a moduli space of abelian varieties or K3 surfaces with additional structures. If $y \in S_0(k)$ is the point corresponding to~$Y$, we obtain a homomorphism $\sigma_y\colon \pi_1(y) \to \pi_1(S_0)$ (with $\pi_1(y) \cong \Gal(\kbar/k)$), and the composition $\phi\circ \sigma_y$ is isomorphic to the Galois representation~$\rho_Y$. Using a classical result of Bogomolov, the assumption that the Mumford-Tate conjecture for~$Y$ is true, for some prime number~$\ell$, implies that the image of $\phi_\ell\circ \sigma_y$ is open in the image of~$\phi_\ell$. Points $y \in S_0(k)$ for which this holds are said to be $\ell$-Galois-generic with respect to~$\phi$. For points on a Shimura variety of abelian type, a result of the first author and Kret says that being $\ell$-Galois-generic for some~$\ell$ implies something that is a priori much stronger, namely that the image of the adelic representation $\phi\circ \sigma_y$ is open in the image of~$\phi$. (See \cite{CadKret}, Thm.~1.1. This result is a consequence of the open adelic image theorem for abelian schemes proven by the first author in~\cite{Cadoret}.) Our main results are obtained by combining this with our result on the image of~$\phi$.

The first two sections of the paper are of a preliminary nature. We recall some conjectures due to Serre that refine the Mumford-Tate conjecture. Also we discuss the notion of (Hodge-)maximality, which is a necessary condition for an adelic open image theorem to hold. Section~\ref{sec:AdelicRep} forms the core of the paper. We define the representation~$\phi$ associated with a Shimura variety; further we state and prove the main result, Theorem~\ref{thm:Coker(rec)} and its Corollary~\ref{cor:Im(phi)}, about the image of~$\phi$. In Section~\ref{sec:GalGen} we briefly recall various notions of Galois-genericity, and we state the result of Cadoret-Kret that we need. In Section~\ref{sec:AV}, which is devoted to abelian varieties, we prove Theorem~A. Also we give examples of abelian varieties for which the~$H_1$ is not Hodge-maximal. These examples suggest that Hodge-maximality depends in a rather subtle way on the structure of the Mumford-Tate group. Finally, in Section~\ref{sec:K3} we discuss K3 surfaces. We prove that the $H^2(1)$ of a K3 surface is always Hodge-maximal, and we deduce Theorem~B.

\subsection*{Acknowledgement.} We thank Akio Tamagawa for his interest and for helpful discussions.
\section{An integral variant of the Mumford-Tate conjecture}

\subsection{}
\label{ssec:H(Y)}
Let $Y$ be a smooth proper scheme of finite type over a subfield~$k$ of~$\bbC$ that is finitely generated over~$\bbQ$. Fix integers $i$ and~$n$.

Let $H = H^i\bigl(Y(\bbC),\bbZ\bigr)(n)/(\text{torsion})$, which is a polarizable Hodge structure of weight $i-2n$. We denote by $G_\Betti \subset \GL(H)$ the Mumford-Tate group. By this we mean that the generic fibre $G_{\Betti,\bbQ} \subset \GL(H_\bbQ)$ is the Mumford-Tate group of~$H_\bbQ$ in the usual sense, and that $G_\Betti$ is the Zariski closure of~$G_{\Betti,\bbQ}$ inside~$\GL(H)$. If the context requires it, we include $Y$ in the notation, writing $G_{\Betti,Y}$, etc.

For a prime number~$\ell$, let $H_\ell = H^i\bigl(Y_\kbar,\bbZ_\ell\bigr)(n)/(\text{torsion})$, which is a free  $\bbZ_\ell$-module of finite rank on which we have a continuous Galois representation
\[
\rho_{\ell} \colon \Gal(\kbar/k) \to \GL(H_\ell)\, .
\]
We denote by $G_\ell \subset \GL(H_\ell)$ the Zariski closure of the image of~$\rho_\ell$. The generic fibre $G_{\ell,\bbQ_\ell} \subset \GL(H_{\ell,\bbQ_\ell})$ is the Zariski closure of the image of the Galois representation on~$H_{\ell,\bbQ_\ell}$. We define $G_\ell^0 \subset G_\ell$ to be the Zariski closure of the identity component $(G_{\ell,\bbQ_\ell})^0$.

If we replace~$k$ by a finitely generated extension, $G_\ell$ may become smaller, but its identity component~$G_\ell^0$ does not change. By a result of Serre (see \cite{SerreRibet} or \cite{LaPi1992}, Prop.~6.14), there exists a finite field extension $k \subset k^\conn$ in~$\bbC$ (depending on~$Y$, $i$ and~$n$) such that for every field $K$ that contains~$k^\conn$ and every prime number~$\ell$, the generic fibre of~$G_{\ell,Y_K}$ is connected.

Via the comparison isomorphism $H \otimes \bbZ_\ell \isomarrow H_\ell$, we may view $G_\Betti \otimes \bbZ_\ell$ as a subgroup scheme of $\GL(H_\ell)$.

\begin{mtc}
\label{conj:MTC}
With notation as above, $G_\Betti \otimes \bbZ_\ell = G_\ell^0$ as subgroup schemes of $\GL(H_\ell)$.
\end{mtc}

Note that, though we have stated the conjecture using group schemes over~$\bbZ_\ell$, the Mumford-Tate conjecture in this form is equivalent to the conjecture that $G_\Betti \otimes \bbQ_\ell$ equals $G^0_{\ell,\bbQ_\ell}$ as algebraic subgroups of $\GL(H_{\ell,\bbQ_\ell})$, which is the Mumford-Tate conjecture as it is usually stated. As $H_\ell$ is Hodge-Tate, it follows from a result of Bogomolov~\cite{Bogomolov} (with some extensions due to Serre; see also~\cite{SerreRibet}) that the image of~$\rho_\ell$ is open in~$G_\ell(\bbQ_\ell)$. Hence the Mumford-Tate conjecture is equivalent to the assertion that $\Image(\rho_\ell)$ is an open subgroup of~$G_\Betti(\bbZ_\ell)$ (assuming $k=k^\conn$). Further note that the Mumford-Tate conjecture depends, a priori, on~$\ell$ and also on the chosen complex embedding of~$k$.

The following strengthening of the Mumford-Tate conjecture was proposed by Serre; see Conjecture~C.3.7 in~\cite{Serre112} and cf.~\cite{SerreMotives}.

\begin{imtc}[Serre]
\label{conj:IMTC}
Retain the notation of~\textup{\ref{ssec:H(Y)}}, and assume $k=k^\conn$. Then for all~$\ell$ the image $\Image(\rho_\ell)$ is contained in~$G_\Betti(\bbZ_\ell)$ as an open subgroup, and the index $\bigl[G_\Betti(\bbZ_\ell) : \Image(\rho_\ell)\bigr]$ is bounded when $\ell$ varies. Further, for almost all~$\ell$ the image of~$\rho_\ell$ contains the commutator subgroup of~$G_\Betti(\bbZ_\ell)$ and all homotheties of the form $c^{i-2n}\cdot \id$, for $c \in \bbZ_\ell^*$.
\end{imtc}

Compared with the usual Mumford-Tate conjecture, the main point in the above conjecture is that it should be possible to bound the index of $\Image(\rho_\ell)$ in~$G_\Betti(\bbZ_\ell)$ by a constant independent of~$\ell$.  

\section{Maximality, and an adelic form of the Mumford-Tate conjecture}
\label{sec:HmaxAMTC}

\begin{definition}
\label{def:Maximal}
Let $M$ be a connected algebraic group over a field~$k$ of characteristic~$0$. Let $k \subset F$ be a field extension, $S$ an algebraic group over~$F$, and $h \colon S \to M_F$ a homomorphism. Then we say that $h$ is \emph{maximal} if there is no non-trivial isogeny of connected $k$-groups $M^\prime \to M$ such that $h$ lifts to a homomorphism $S \to M^\prime_F$.
\end{definition}

Note that if $F$ is algebraically closed, maximality of~$h$ only depends on its $M(F)$-conjugacy class.

\subsection{}
\label{ssec:pi1Borov}
The following remarks closely follow \cite{WintenbCM},~0.2. Let $M$ be a connected reductive group over a subfield $k \subset \bbC$. Let $\cC$ be a conjugacy class of complex cocharacters $\mu \colon \bbG_{\mult,\bbC} \to M_\bbC$. Let $\pi_1(M)$ denote the fundamental group of~$M$ as defined by Borovoi in~\cite{Borovoi}. This is a finitely generated $\bbZ$-module with a continuous action of $\Gamma = \Gal(\kbar/k)$. If $(X^*,R,X_*,R^\vee)$ is the root datum of~$M_\kbar$ and $Q(R^\vee) = \langle R^\vee\rangle \subset X_*$ is the coroot lattice, $\pi_1(M) \cong X_*/Q(R^\vee)$. 

The conjugacy class~$\cC$ of complex cocharacters corresponds to an orbit $\cC \subset X_*$ under the Weyl group~$W$. As the induced $W$-action on~$\pi_1(M)$ is trivial, any two elements in~$\cC$ have the same image in~$\pi_1(M)$; call it $[\cC] \in \pi_1(M)$.

If $M^\prime$ is a connected reductive $k$-group and $f\colon M^\prime \to M$ is an isogeny, the map induced by~$f$ identifies $\pi_1(M^\prime)$ with a $\bbZ[\Gamma]$-submodule of finite index in~$\pi_1(M)$. Conversely, every such submodule comes from an isogeny of connected $k$-groups, which is unique up to isomorphism over~$M$. A conjugacy class~$\cC$ as above lifts to~$M^\prime$ if and only if $[\cC] \in \pi_1(M^\prime)$. 

We shall usually be in a situation where the $\bbZ[\Gamma]$-submodule spanned by~$[\cC]$ has finite index in~$\pi_1(M)$. (See below.) In this case, there is a uniquely determined maximal isogeny $M^\prime \to M$ of connected $k$-groups such that the $\mu \in \cC$ lift to complex cocharacters of~$M^\prime$. Further, the cocharacters $\mu \in \cC$ are maximal in the sense of Definition~\ref{def:Maximal} if and only if $[\cC]$ generates~$\pi_1(M)$ as a $\bbZ[\Gamma]$-module.

\begin{definition}
\label{def:HodgeMax}
Let $V$ be a $\bbQ$-Hodge structure, given by the homomorphism $h \colon \bbS \to \GL(V)_\bbR$. Let $M \subset \GL(V)$ be the Mumford-Tate group. Then $V$ is said to be \emph{Hodge-maximal} if $h \colon \bbS \to M_\bbR$ is maximal in the sense of Definition~\ref{def:Maximal}.
\end{definition}

Hodge-maximality of~$V$ is equivalent to the condition that the associated cocharacter $\mu \colon \bbG_{\mult,\bbC} \to M_\bbC$ is maximal. This allows us to apply~\ref{ssec:pi1Borov}, taking $k=\bbQ$ and $\Gamma = \Gal(\Qbar/\bbQ)$. If $\cC$ is the $M(\bbC)$-conjugacy class of~$\mu$, the assumption that $M$ is the Mumford-Tate group of~$V$ implies that the $\bbZ[\Gamma]$-submodule of~$X_*(M)$ generated by~$\cC$ has finite index. Hence also the $\bbZ[\Gamma]$-submodule of~$\pi_1(M)$ generated by~$[\cC]$ has finite index. By what was explained in~\ref{ssec:pi1Borov}, $V$ is Hodge-maximal if and only if $\bbZ[\Gamma] \cdot [\cC] = \pi_1(M)$.

\subsection{}
\label{ssec:AdelicRho}
Retaining the notation and assumptions of~\ref{ssec:H(Y)}, let $\hat{H} = \prod_{\ell}\; H_\ell$, where the product is taken over all prime numbers~$\ell$. We then have a continuous Galois representation
\[
\rho \colon \Gal(\kbar/k) \to \GL(\hat{H})
\]
whose $\ell$-primary component is the representation~$\rho_\ell$ defined in~\ref{ssec:H(Y)}. The comparison isomorphism between singular and \'etale cohomology gives an isomorphism $\hat{H} \cong H \otimes \Zhat$; via this we may view~$\rho$ as a representation taking values in $\GL(H)\bigl(\Zhat\bigr)$.

The following adelic version of the Mumford-Tate conjecture was proposed by Serre; see Conjecture~C.3.8 in~\cite{Serre112}.

\begin{amtc}[Serre]
\label{conj:AMTC}
With notation as in~\textup{\ref{ssec:H(Y)}}, suppose that $k=k^\conn$ and that the Hodge structure~$H$ is Hodge-maximal. Then $\Image(\rho)$ is an open subgroup of~$G_\Betti(\Af)$.
\end{amtc}

\begin{remark}
\label{rem:HMaxWintenb}
It follows from a result of Wintenberger that the Hodge-maximality of~$H$ is essential. (For simplicity we shall assume here that the ground field~$k$ is a number field.) Indeed, suppose there exists a non-trivial isogeny of $\bbQ$-groups $M^\prime \to G_\Betti$ with $M^\prime$ connected, such that $h \colon \bbS \to G_{\Betti,\bbR}$ lifts to a homomorphism $\bbS \to M^\prime_\bbR$. By \cite{WintenbRelev}, Th\'eor\`eme~2.1.7, and possibly after replacing the ground field~$k$ with a finite extension, the $\ell$-adic representations~$\rho_\ell$ lift to Galois representations with values in~$M^\prime$. On the other hand, it follows from \cite{PlatonRapin}, Proposition~6.4, that the image of $M^\prime(\Af) \to G_\Betti(\Af)$ is not open in~$G_\Betti(\Af)$; hence $\Image(\rho)$ cannot be open in~$G_\Betti(\Af)$.  
\end{remark}

\subsection{}
\label{ssec:HmaxGX}
Let $(G,X)$ be a Shimura datum such that $G$ is the generic Mumford-Tate group on~$X$. By definition, this means that there exist points $h \in X$ for which there is no proper subgroup $G^\prime \subset G$ such that $h\colon \bbS \to G_\bbR$ factors through~$G^\prime_\bbR$. The locus of points~$h$ for which this holds forms a subset $X^\hgen \subset X$ called the Hodge-generic locus.

Similar to the definition in~\ref{def:HodgeMax}, we say that $(G,X)$ is maximal if there is no non-trivial isogeny of Shimura data $f\colon (G^\prime,X^\prime) \to (G,X)$. (Note that $G^\prime$ is necessarily connected, as it is part of a Shimura datum.) Clearly, if $(G,X)$ is maximal then all $h \colon \bbS \to G_\bbR$ in~$X^\hgen$ are maximal in the sense of Definition~\ref{def:Maximal}. Conversely, if some $h\in X^\hgen$ is maximal then $(G,X)$ is maximal. (If we have $f$ as above, $f(X^\prime) \subset X$ is a union of connected components but need not be the whole~$X$; however, changing~$f$ by an inner automorphism of~$G$ we can always ensure that some given $h \in X$ lies in~$f(X^\prime)$.)

To each $h \in X$ corresponds a complex cocharacter~$\mu_h$ of~$G$, and the $\mu_h$ thus obtained all lie in a single $G(\bbC)$-conjugacy class~$\cC(G,X)$. It follows from the previous remarks that $(G,X)$ is maximal if and only if the associated class $\bigl[\cC(G,X)\bigr]$ generates~$\pi_1(G)$ as a $\bbZ[\Gamma]$-module.

\begin{remarks}
\label{rem:MorphShim}
(\romannumeral1) Let $f\colon (G_1,X_1) \to (G_2,X_2)$ be a morphism of Shimura data with $f\colon G_1 \to G_2$ surjective. If $G_1$ is the generic Mumford-Tate group on~$X_1$ then $G_2$ is the generic Mumford-Tate group on~$X_2$. If $f\colon G_1 \to G_2$ is an isogeny then also the converse is true.

(\romannumeral2) If in~(\romannumeral1) $\Ker(f)$ is semisimple then also maximality is preserved: if $(G_1,X_1)$ is maximal, so is $(G_2,X_2)$. Indeed, in this case $\pi_1(G_2)$ is a quotient of~$\pi_1(G_1)$ in such a way that $\bigl[\cC(G_2,X_2)\bigr]$ is the image of $\bigl[\cC(G_1,X_1)\bigr]$.

(\romannumeral3) Given a Shimura datum $(G,X)$, it follows from the remarks in~\ref{ssec:pi1Borov} that, up to isomorphism, there exists a unique isogeny of Shimura data $f \colon (\tilde{G},\tilde{X}) \to (G,X)$ such that $(\tilde{G},\tilde{X})$ is maximal. By~(\romannumeral1), $G$ is the generic Mumford-Tate group on~$X$ if and only if $\tilde{G}$ is the generic Mumford-Tate group on~$\tilde{X}$.
\end{remarks}

\section{Adelic representations associated with Shimura varieties}
\label{sec:AdelicRep}

\subsection{}
\label{ssec:ShimRepDef}
Let $(G,X)$ be a Shimura datum. Throughout we assume that $G$ is the generic Mumford-Tate group on~$X$. (See~\ref{ssec:HmaxGX}.) In this case, conditions (2.1.1.1--5) of \cite{DelShimura} are satisfied and $Z(\bbQ)$ is discrete in~$Z(\Af)$. (Cf.\ \cite{DelShimura}, 2.1.11; for details see also \cite{UllYafQuebec}, Lemma~5.13.) 

If $K \subset G(\Af)$ is a compact open subgroup, we have $\Sh_K(G,X)\bigl(\bbC\bigr) = G(\bbQ)\backslash X \times G(\Af)/K$. For $h \in X$ and $\gamma K \in G(\Af)/K$, let $[h,\gamma K]$ denote the corresponding $\bbC$-valued point of $\Sh_K(G,X)$.

Let $K_0 \subset G(\Af)$ be a neat compact open subgroup. If $K \subset K_0$ is an open subgroup, the induced morphism on Shimura varieties $\Sh_{K,K_0} \colon \Sh_K(G,X) \to \Sh_{K_0}(G,X)$ is finite \'etale. If, moreover, $K$ is normal in~$K_0$, this morphism is Galois with group $K_0/K$.

Choose a point $h_0 \in X$. Let $S_{0,\bbC}$ be the irreducible component of $\Sh_{K_0}(G,X)_\bbC$ that contains the point $[h_0,eK_0]$. Let $F$ be the field of definition of this component, which is a finite extension of the reflex field~$E(G,X)$. To simplify notation, we write $\Sh_K$ for $\Sh_K(G,X)_F$. By construction, we have a geometrically irreducible component $S_0 \subset \Sh_{K_0}$.

For $K$ an open normal subgroup of~$K_0$, let $S_K \subset \Sh_K$ be the inverse image of $S_0 \subset \Sh_{K_0}$ under the transition morphism $\Sh_{K,K_0}$. Then $S_K \to S_0$ is \'etale Galois with group $K_0/K$. 

Let $\bar{s}_K = [h_0,eK] \in S_K(\bbC)$. The system of points $\bar{s} = (\bar{s}_K)$ thus obtained is compatible in the sense that $\Sh_{K_2,K_1}(\bar{s}_{K_2}) = \bar{s}_{K_1}$ for $K_2 \subset K_1 \subset K_0$. We abbreviate $\bar{s}_{K_0}$ to~$\bar{s}_0$. With this choice of base points, $S_K \to S_0$ corresponds to a homomorphism $\phi_K \colon \pi_1(S_0,\bar{s}_0) \to K_0/K$. If $K_2 \subset K_1$ are open normal subgroups of~$K_0$, the homomorphism $\phi_{K_1}$ equals the composition of $\phi_{K_2} \colon \pi_1(S_0,\bar{s}_0) \to K_0/K_2$ and the canonical map $K_0/K_2 \to K_0/K_1$. We may therefore pass to the limit; as the intersection of all open normal subgroups $K \subset K_0$ is trivial, this gives a continuous homomorphism
\begin{equation}
\label{eq:phi}
\phi_{\bar{s}} \colon \pi_1(S_0,\bar{s}_0) \to K_0\, .
\end{equation}

\begin{remarks}
\label{rem:phi}
(\romannumeral1) The homomorphism~$\phi$ is functorial in the following sense. Let $f \colon (G,X) \to (G^\prime,X^\prime)$ be a morphism of Shimura data. On reflex fields we have $E(G^\prime,X^\prime) \subset E = E(G,X)$. Let $K_0 \subset G(\Af)$ and $K_0^\prime \subset G^\prime(\Af)$ be neat compact open subgroups with $f(K_0) \subset K_0^\prime$. Choose $h_0 \in X$ and let $h_0^\prime = f(h_0) \in X^\prime$. As in~\ref{ssec:ShimRepDef}, this gives rise to geometrically irreducible components $S_0 \subset \Sh_{K_0}(G,X)_F$ and $S_0^\prime \subset \Sh_{K_0^\prime}(G^\prime,X^\prime)_{F^\prime}$, and it is easy to see that $EF^\prime \subset F$. Further, $h_0$ and~$h_0^\prime$ give rise to compatible systems of base points $\bar{s} = (\bar{s}_K)$ and $\bar{s}^\prime = (\bar{s}^\prime_{K^\prime})$. The morphism $\Sh(f) \colon \Sh_{K_0}(G,X) \to \Sh_{K_0^\prime}(G^\prime,X^\prime)_E$ restricts to a morphism $S_0 \to S_{0,F}^\prime$ over~$F$ with $\bar{s}_0 \mapsto \bar{s}_0^\prime$. We then have a commutative diagram
\[
\begin{tikzpicture}
\node (A) at (0,2) {$\pi_1(S_0,\bar{s}_0)$};
\node (B) at (4.5,1.95) {$\pi_1(S_{0,F}^\prime,\bar{s}_0^\prime) \subset \pi_1(S_0^\prime,\bar{s}_0^\prime)$};
\node (C) at (0,0) {$K_0$};
\node (D) at (5.5,0) {$K_0^\prime$};
\node (E) at (5.5,1.8) {};
\draw[->] (A) -- (C) node[pos=.5,left] {$\scriptstyle \phi_{\bar{s}}$};
\draw[->] (A) -- (B) node[pos=.5,above] {$\scriptstyle \Sh(f)_*$};
\draw[->] (C) -- (D) node[pos=.5,above] {$\scriptstyle f$};
\draw[->] (E) -- (D) node[pos=.48,right] {$\scriptstyle \phi^\prime_{\bar{s}^\prime}$};
\end{tikzpicture}
\]

(\romannumeral2) The homomorphism~$\phi$ is essentially independent of the choice of $h_0 \in X$ and the resulting system of base points~$\bar{s}$. If we choose another point $h_0^\prime \in X$ that lies in the same connected component as~$h_0$, this gives rise to a different collection of base points $\bar{s}^\prime$. There is a canonically determined conjugacy class of isomorphisms $\alpha\colon \pi_1(S_0,\bar{s}_0) \isomarrow \pi_1(S_0,\bar{s}_0^\prime)$. For $\alpha$ in this class, the homomorphisms $\phi_{\bar{s}}$ and $\phi_{\bar{s}^\prime} \circ \alpha$ differ by an inner automorphism of~$K_0$.

If $h_0^\prime$ lies in a different connected component of~$X$, there exists an inner automorphism $\alpha = \Inn(g)$ of~$(G,X)$ such that $\alpha(h_0)$ and~$h_0^\prime$ lie in the same component of~$X$. By functoriality together with the previous case, it follows that the associated representations $\phi_{\bar{s}}$ and~$\phi_{\bar{s}^\prime}$ are conjugate when we restrict to suitable subgroups of finite index in the respective $\pi_1$'s.
\end{remarks}

In view of the above remarks, we shall from now on omit the base point~$\bar{s}_0$ from the notation, unless it plays a role in the discussion.

\subsection{}
Our main goal in this section is to describe the image of the representation $\phi = \phi_{\bar{s}}$ defined in~\ref{ssec:ShimRepDef}. In order to do this, we need to recall some definitions and results from the theory of Shimura varieties. For proofs of the stated results we refer to~\cite{DelShimura}, Section~2. Throughout, $(G,X)$ is a Shimura datum as in~\ref{ssec:ShimRepDef}. Let $G(\bbR)_+ \subset G(\bbR)$ be the subgroup of elements that are mapped into the identity component $G^\ad(\bbR)^+ \subset G^\ad(\bbR)$ (for the Euclidean topology) under the adjoint map. 

Let $\gamma \colon \tilde{G} \to G^\der$ denote the simply connected cover of the derived group of~$G$. Then $G(\bbQ)\gamma\tilde{G}(\bbA)$ is a closed normal subgroup of~$G(\bbA)$, and we define $\pi(G) = G(\bbA)/G(\bbQ)\gamma\tilde{G}(\bbA)$. Next define $\bar\pi_0\pi(G) = \pi_0\pi(G)/\pi_0G(\bbR)_+$, where $\pi_0$ means the group of connected components. This $\bar\pi_0\pi(G)$ is an abelian profinite group.

Define $G(\bbQ)_+ = G(\bbQ) \cap G(\bbR)_+$, and let $G(\bbQ)_+^-$ denote its closure inside~$G(\Af)$. The natural homomorphism $G(\Af) \to \bar\pi_0\pi(G)$ induces an isomorphism $G(\Af)/G(\bbQ)_+^- \isomarrow \bar\pi_0\pi(G)$. If there is no risk of confusion we identify the two groups. 

The group $G(\Af)$ acts on the Shimura variety $\Sh(G,X)$ from the right. This action makes the set $\pi_0\bigl(\Sh(G,X)_\bbC\bigr)$ a torsor under $\bar\pi_0\pi(G)$.

\subsection{}
\label{ssec:rechom}
Let $E = E(G,X)$ be the reflex field and $E^\ab$ its maximal abelian extension. As $\pi_0\bigl(\Sh(G,X)_\bbC\bigr)$ is a torsor under $\bar\pi_0\pi(G)$, which is abelian, the action of $\Gal(\overline{E}/E)$ on $\pi_0\bigl(\Sh(G,X)_\bbC\bigr)$ gives rise to a well-determined homomorphism
\begin{equation}
\label{eq:rec}
\rec\colon \Gal(E^\ab/E) \to \bar\pi_0\pi(G) \cong G(\Af)/G(\bbQ)_+^- \, ,
\end{equation}
called the reciprocity homomorphism.

Let $q\colon G(\Af) \to \bar\pi_0\pi(G)$ be the canonical map. For $K \subset G(\Af)$ a compact open subgroup, we have an induced action of $\bar\pi_0\pi(G)$ on the set of irreducible components of $\Sh_K(G,X)_\bbC$. All these components have the same stabilizer in $\bar\pi_0\pi(G)$, namely~$q(K)$. Let $\rec_K\colon \Gal(E^\ab/E) \to \bar\pi_0\pi(G)/q(K)$ denote the reciprocity map modulo~$q(K)$.

\begin{proposition}
\label{prop:Im(phi)Step1}
Retain the notation and assumptions of~\textup{\ref{ssec:ShimRepDef}}. Then the image of the homomorphism $\phi \colon \pi_1(S_0) \to K_0$ is the subgroup $q^{-1}\bigl(\Image(\rec)\bigr) \cap K_0$ of~$K_0$. 
\end{proposition}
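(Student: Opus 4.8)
The plan is to identify $\Image(\phi)$ by combining two facts: first, the image records exactly which covers $S_K \to S_0$ are geometrically connected along the chosen component, and second, the set of geometric connected components of $\Sh_K(G,X)$ together with its Galois action is governed by the reciprocity map via the torsor structure under $\bar\pi_0\pi(G)$. Concretely, for an open normal subgroup $K \subset K_0$ the map $\phi_K \colon \pi_1(S_0) \to K_0/K$ is surjective onto the stabilizer in $K_0/K$ of the component $S_K \cap (\text{fibre over } S_0)$ — equivalently, its image is the subgroup of $K_0/K$ that acts trivially on the set of geometrically irreducible components of $S_{K}$ lying over $S_0$. Passing to the inverse limit over $K$, we get $\Image(\phi) = \bigcap_K \phi_K^{-1}(\cdots)$ inside $K_0$, so it suffices to compute each $\Image(\phi_K)$ and check compatibility.

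First I would make precise the relation between $\phi_K$ and connected components. The cover $S_K \to S_0$ is Galois with group $K_0/K$; its geometric connected components are permuted transitively by $K_0/K$, and $\Image(\phi_K)$ is the stabilizer of the component containing the base point $\bar s_K$. On the other hand, $\pi_0\bigl(\Sh_K(G,X)_\bbC\bigr)$ is a torsor under $\bar\pi_0\pi(G)/q(K)$, with $S_0 \subset \Sh_{K_0}$ corresponding to a point whose preimage in $\pi_0(\Sh_K)$ is a coset of $q(K_0)/q(K)$; so the components of $S_K$ over $S_0$ form a torsor under $q(K_0)/q(K)$, and the residual $K_0/K$-action on this torsor factors through $q\colon K_0/K \to q(K_0)/q(K)$. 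Hence the stabilizer of a geometric component is $\ker\bigl(K_0/K \to q(K_0)/q(K)\bigr) = q^{-1}(q(K)) \cap K_0 / K$ — but this is the stabilizer of the $\bbC$-component only if we forget the Galois descent data; the actual geometrically irreducible component $S_K$ over $F$ may decompose over $\bbC$, and the components are permuted by $\Gal(\overline F/F)$ through the reciprocity action. This is where $\rec$ enters: a deck transformation $g \in K_0/K$ fixes the component of $\bar s_K$ as a component of the $F$-scheme $S_K$ precisely when $q(g)$, acting on the $q(K_0)/q(K)$-torsor, can be undone by a Galois element, i.e.\ when $q(g) \in \Image(\rec_{K_0})\cdot q(K)/q(K)$ — more carefully, when $q(g) \in q(K_0) \cap \Image(\rec_{K})$ modulo $q(K)$. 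Since $q(K_0) \subset \bar\pi_0\pi(G)$ is exactly the stabilizer of the component $S_0$, and the components of $\Sh_{K_0,\bbC}$ over a single $F$-component $S_0$ are a torsor under... — actually cleaner: $\Image(\rec_{K_0}) \supset$ the subgroup moving within one $F$-component, so the geometrically irreducible $S_K$ over $S_0$ correspond to the orbit of $\Image(\rec_K)$ inside the $q(K_0)/q(K)$-torsor.

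So the key computation reduces to: $\Image(\phi_K) = q^{-1}\bigl(\Image(\rec_K) \cap q(K_0)\bigr) \cap K_0$ inside $K_0/K$, where I am viewing $\Image(\rec_K) \subset \bar\pi_0\pi(G)/q(K)$ and pulling back. One checks $\rec_K$ is the reduction of $\rec$ mod $q(K)$, that these images are compatible under $K_2 \subset K_1$, and that $\bigcap_K q^{-1}(\Image(\rec_K)) = q^{-1}(\Image(\rec))$ since $q$ is continuous with $K$ running through a neighbourhood basis of $1$ and $\bar\pi_0\pi(G)$ is profinite. Intersecting with $K_0$ throughout (and noting $q(K_0) \cap \Image(\rec) = q(K_0 \cap q^{-1}(\Image \rec))$) yields $\Image(\phi) = q^{-1}(\Image(\rec)) \cap K_0$, as claimed. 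The main obstacle I anticipate is bookkeeping the two layers of group actions — the deck group $K_0/K$ versus the Galois group acting via $\rec$ on $\pi_0(\Sh_K)_\bbC$ — and in particular verifying that "$g$ fixes the $F$-component $S_K$" translates to "$q(g) \in \Image(\rec_K)$" rather than to a mere stabilizer condition; this requires carefully unwinding Deligne's description (\cite{DelShimura}, 2.4–2.6) of how $\Gal(\overline E / E)$ acts on $\pi_0$ compatibly with the $G(\Af)$-action and with the transition maps $\Sh_{K_2,K_1}$. Everything else is formal manipulation with profinite groups and inverse limits.
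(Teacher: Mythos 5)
Your proposal is correct and follows essentially the same route as the paper: identify $\Image(\phi_K)$ as the stabilizer in $K_0/K$ of the $F$-connected component of $\bar s_K$, describe the geometric components of $S_K$ as a torsor under $q(K_0)/q(K)$ with $\Gal(E^\ab/F)$ acting through $\rec_K$, deduce $\Image(\phi_K) = q^{-1}\bigl(\rec_K(\Gal(E^\ab/F))\bigr)$, rewrite this using $\Gal(E^\ab/F)=\rec^{-1}\bigl(q(K_0)\bigr)$, and pass to the limit. The only real slip is the phrase in your first paragraph claiming $\Image(\phi_K)$ is the subgroup acting trivially on \emph{all} geometric components (which would give $q^{-1}(q(K))\cap K_0/K$, too small), but you correctly override this in the second paragraph with the Galois-orbit description; the paper streamlines exactly that step via the identity $\Gal(E^\ab/F)=\rec^{-1}\bigl(q(K_0)\bigr)$.
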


\begin{proof}
Since $F$ is defined to be the field of definition of the irreducible component $S_{0,\bbC} \subset \Sh_{K_0}(G,X)_\bbC$, we have 
\[
\Gal(E^\ab/F) = \rec^{-1}\bigl(q(K_0)\bigr)
\]
as subgroups of~$\Gal(E^\ab/E)$. For $K$ a normal open subgroup of~$K_0$, the set of geometric irreducible components of~$S_K$ is a torsor under $q(K_0)/q(K) \subset \bar\pi_0\pi(G)/q(K)$, and the irreducible components of~$S_K$ (over~$F$) correspond to the orbits under the action of $\Gal(E^\ab/F)$ via~$\rec_K$. Hence the image of $\phi_K \colon \pi_1(S_0) \to K_0/K$ is the inverse image under $q \colon K_0/K \to q(K_0)/q(K)$ of $\rec_K\bigl(\Gal(E^\ab/F)\bigr)$. The latter group is the image of $\bigl(\Image(\rec) \cap q(K_0)\bigr)$ in $q(K_0)/q(K)$, and so we find that $\Image(\phi_K)$ is the image of $q^{-1}\bigl(\Image(\rec)\bigr) \cap K_0$ in $K_0/K$. The proposition follows by passing to the limit.
\end{proof}

\begin{theorem}
\label{thm:Coker(rec)}
With assumptions as in~\textup{\ref{ssec:ShimRepDef}}, the cokernel of the reciprocity map~\eqref{eq:rec} has finite exponent, and it is a finite discrete group (i.e., $\Image(\rec) \subset \bar\pi_0\pi(G)$ is an open subgroup) if $(G,X)$ is maximal. 
\end{theorem}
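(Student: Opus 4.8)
The plan is to analyze the reciprocity map via Deligne's explicit description of $\rec$ in terms of the reciprocity law on the connected components of the Shimura variety. Recall that $\rec$ is built from the reflex norm: there is a cocharacter $\mu = \mu_{h_0}$ of $G$ defined over the reflex field $E$, and composing the reciprocity map of class field theory for $E$ with the reflex norm $\mathrm{Nm}_{E/\mathbb{Q}} \circ \mu$ (i.e. the norm of $\mu$ restricted to the group $\mathrm{Res}_{E/\mathbb{Q}}\mathbb{G}_m$) produces a homomorphism $\mathbb{A}_E^* \to G(\mathbb{A})$, and $\rec$ is obtained from this after passing to the quotient $\bar\pi_0\pi(G)$. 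Concretely, $\Image(\rec)$ is, up to the quotient, the subgroup of $\bar\pi_0\pi(G)$ generated topologically by the image of the reflex norm cocharacter evaluated on $\mathbb{A}_{E,f}^*$ together with the archimedean contribution. First I would set up this description carefully, reducing everything to a statement about the cocharacter $\mu$ and the abelianization $G^{\mathrm{ab}}$ of $G$, since $\bar\pi_0\pi(G)$ only depends on the torus quotient $T = G^{\mathrm{ab}}$ (the simply connected cover kills the semisimple part). Thus we may replace $G$ by $T$ and $\mu$ by its image $\bar\mu$ in $X_*(T)$, and $\bar\pi_0\pi(G)$ is a quotient of $T(\mathbb{A}_f)/T(\mathbb{Q})^{-}$ (more precisely $\bar\pi_0\pi(G) \cong \pi_0$ of $T(\mathbb{A})/T(\mathbb{Q})$ suitably interpreted).

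The key step is then a finiteness statement about the cokernel of the map
\[
\mathrm{Nm}_{E/\mathbb{Q}}(\bar\mu) \colon \mathbb{A}_E^* \longrightarrow T(\mathbb{A}),
\]
after passing to $\bar\pi_0\pi$. For the first assertion (finite exponent of the cokernel), I would argue as follows: over $\overline{\mathbb{Q}}$, the cocharacter $\bar\mu \in X_*(T)$ together with its $\Gamma$-conjugates generates a finite-index sublattice of $X_*(T)$ — this is exactly the hypothesis that $G$ is the generic Mumford-Tate group on $X$ (see the discussion in \ref{ssec:HmaxGX}, which says $[\cC(G,X)]$ generates a finite-index $\mathbb{Z}[\Gamma]$-submodule of $\pi_1(G)$, hence a fortiori of $X_*(T^{\mathrm{ad-free part}})$; one checks the same for $T$ itself). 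The reflex norm $\mathrm{Nm}_{E/\mathbb{Q}}(\bar\mu)$ realizes, up to the quotient, precisely the subtorus of $T$ corresponding to this $\Gamma$-span, so its image has finite index in $T$ as an algebraic group; therefore the induced map on $\bar\pi_0\pi$ has cokernel killed by the degree $d$ of this isogeny, giving finite exponent. For the second assertion, maximality of $(G,X)$ means $[\cC(G,X)]$ generates $\pi_1(G)$ (equivalently $X_*(T)$, in the torus case) as a $\mathbb{Z}[\Gamma]$-module — not just a finite-index submodule. The point is then that $\bar\pi_0\pi(G) = G(\mathbb{A}_f)/G(\mathbb{Q})_+^{-}$ is generated as a topological group by $\mathrm{Nm}_{E/\mathbb{Q}}(\bar\mu)(\mathbb{A}_{E,f}^*)$ together with the image of $G(\widehat{\mathbb{Z}})$-type compact subgroups: essentially, because $\bar\mu$ and its conjugates span all of $X_*(T)$, the cocharacters exhaust $T$ "integrally adelically", and one invokes the surjectivity of the norm map $\mathbb{A}_E^* \to \mathbb{A}_{\mathbb{Q}}^*$ on the relevant quotients together with the fact that the Galois action on $X_*(T)$ is permuted transitively enough by the embeddings of $E$. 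I would make this precise by working prime-by-prime: for all but finitely many $\ell$ the reflex norm on $\mathbb{Z}_\ell$-points is already surjective onto $q(G(\mathbb{Z}_\ell))$-cosets by a smoothness/Lang-type argument, and at the bad primes the cokernel is finite, so the total cokernel is finite and discrete, i.e. $\Image(\rec)$ is open.

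I would organize the proof in three movements: (1) reduce to tori via $\bar\pi_0\pi(G) = \bar\pi_0\pi(G^{\mathrm{ab}})$ and rewrite $\rec$ through the reflex norm of $\mu$; (2) prove the finite-exponent statement using that $\mathbb{Z}[\Gamma]\cdot\bar\mu$ has finite index in $X_*(G^{\mathrm{ab}})$, which translates the index into an isogeny degree bounding the cokernel; (3) upgrade to openness under the maximality hypothesis by a local analysis showing the cokernel is supported at finitely many primes and finite at each. The main obstacle I anticipate is step (3): it is delicate to pass from the algebraic-group statement "$\bar\mu$ generates $X_*(T)$ as a $\mathbb{Z}[\Gamma]$-module" to the adelic statement "the reflex norm cocharacter image topologically generates $\bar\pi_0\pi(G)$", because the reflex norm involves $E$, not $\mathbb{Q}$, and one must control the interaction between (a) the $\mathrm{Gal}(\overline{\mathbb{Q}}/\mathbb{Q})$-module structure on $X_*(T)$, (b) the decomposition of $\mathbb{A}_E^*$ according to the places of $E$ over each rational prime, and (c) the failure of $\mathrm{Nm}_{E/\mathbb{Q}}$ to be surjective on idele class groups (it has cokernel governed by $\mathrm{Gal}(E/\mathbb{Q})$ via class field theory). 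The saving grace is that we only need the cokernel modulo the dense subgroup $G(\mathbb{Q})_+^{-}$ and modulo compact open subgroups, which absorbs all the class-group obstructions; making that precise — probably via Deligne's Corollaire 2.2.7 and Proposition 2.2.x in \cite{DelShimura}, or an adaptation thereof — is where the real work lies.
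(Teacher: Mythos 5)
Your overall strategy --- translate $\rec$ into the reflex norm, prove finite exponent from the finite-index property of $\bbZ[\Gamma]\cdot\mu$, and upgrade to openness from the generating property under maximality --- is the right one in the torus case, and your analysis there matches the paper. But there is a genuine gap at the very first step, the ``reduction to tori''.

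You assert that $\bar\pi_0\pi(G)$ depends only on $G^\ab$ because ``the simply connected cover kills the semisimple part.'' This is false in general. By definition $\pi(G) = G(\bbA)/G(\bbQ)\gamma\tilde{G}(\bbA)$, where $\gamma\colon\tilde{G}\to G^\der$ is the \emph{simply connected} cover; one quotients by the image of $\tilde{G}(\bbA)$, not by $G^\der(\bbA)$. When $G^\der$ is not simply connected the map $\gamma\colon \tilde{G}(\bbA)\to G^\der(\bbA)$ is not surjective (its cokernel is controlled by the local $H^1$ of the fundamental group of $G^\der$), so $\pi(G)\to\pi(G^\ab)$ has a nontrivial kernel, and $\bar\pi_0\pi(G)$ genuinely sees part of the semisimple data. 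The identification $\pi_0\pi(G)\isomarrow\pi_0\pi(G^\ab)$ is a theorem of Deligne (\cite{DelTravShim}, Th\'eor\`eme~2.4) that holds precisely \emph{when $G^\der$ is simply connected}; it is not a formal consequence of the definitions. This matters for the theorem because the maximality hypothesis constrains $\pi_1(G)$, not just $\pi_1(G^\ab)$, and your reduction would discard exactly the part of $\pi_1(G)$ coming from $G^\der$.

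The paper repairs this with Lemma~\ref{lem:CoveringGX}: using Milne--Shih it produces a covering Shimura datum $(\tilde{G},\tilde{X})\to(G,X)$ with $\tilde{G}^\der$ simply connected, still satisfying the generic Mumford--Tate hypothesis, with connected kernel and (after a further isogeny) still maximal if $(G,X)$ was. One then proves the result for $(\tilde{G},\tilde{X})$ (where Deligne's theorem applies and one really can reduce to $G^\ab$), and descends via the functoriality square~\eqref{eq:G1X1->G2X2} and a Platonov--Rapinchuk estimate (Remark~\ref{rem:PlatRapin}) controlling the cokernel of $\tilde{G}(\Af)\to G(\Af)$. You would need something playing the role of this lemma; without it the reduction in your movement (1) does not go through. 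A secondary remark: your movement (3) proposes a prime-by-prime ``smoothness/Lang-type'' analysis; the paper's route through the algebraic statement --- surjectivity of $\nu$ with connected kernel, then \cite{PlatonRapin} Prop.~6.5 --- is cleaner and avoids the bookkeeping you flag as the ``real work,'' but that choice is stylistic once the reduction is fixed.
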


Before we start discussing the proof, let us give the main corollary of this result.

\begin{corollary}
\label{cor:Im(phi)}
With assumptions as in~\textup{\ref{ssec:ShimRepDef}}, consider the homomorphism $\phi \colon \pi_1(S_0) \to K_0$, and for a prime number~$\ell$, let $\phi_\ell \colon \pi_1(S_0) \to K_{0,\ell}$ be its $\ell$-primary component. Fix a free $\bbZ$-module~$H$ of finite rank and a closed embedding $i \colon G \hookrightarrow \GL(H\otimes \bbQ)$, and let $\cG \subset \GL(H)$ be the Zariski closure of~$G$ in $\GL(H)$.

\begin{enumerate}
\item There exists a positive integer~$N$, depending only on $\cG$ and~$X$, such that $\bigl[\cG(\bbZ_\ell):\Image(\phi_\ell)\bigr] \leq N$ for all~$\ell$.

\item For almost all~$\ell$ the image of~$\phi_\ell$ contains the commutator subgroup of~$\cG(\bbZ_\ell)$.

\item If $(G,X)$ is maximal in the sense defined in~\textup{\ref{ssec:HmaxGX}}, $\Image(\phi)$ is an open subgroup of~$G(\Af)$.
\end{enumerate}
\end{corollary}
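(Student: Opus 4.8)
The plan is to read off all three assertions from Proposition~\ref{prop:Im(phi)Step1} together with Theorem~\ref{thm:Coker(rec)}. By Proposition~\ref{prop:Im(phi)Step1},
\[
\Image(\phi) \;=\; q^{-1}\bigl(\Image(\rec)\bigr) \cap K_0 ,
\]
where $q \colon G(\Af) \to \bar\pi_0\pi(G)$ is the canonical map. Assertion~(iii) is then immediate: if $(G,X)$ is maximal, the last part of Theorem~\ref{thm:Coker(rec)} says $\Image(\rec)$ is open in $\bar\pi_0\pi(G)$; since $q$ is continuous, $q^{-1}\bigl(\Image(\rec)\bigr)$ is open in $G(\Af)$, and intersecting with the compact open subgroup $K_0$ shows $\Image(\phi)$ is open in $G(\Af)$.

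For~(i) and~(ii) I would first record a lower bound for each $\Image(\phi_\ell)$. The Shimura datum axioms force $G^\ad$ to have no $\bbQ$-simple factor anisotropic over $\bbR$, so $\tilde G$ satisfies strong approximation away from $\infty$; hence $\gamma\bigl(\tilde G(\Af)\bigr) \subset \overline{\gamma\bigl(\tilde G(\bbQ)\bigr)} \subset \overline{G(\bbQ)_+} = \Ker(q)$, and a fortiori $\gamma\bigl(\tilde G(\bbQ_\ell)\bigr) \subset \Ker(q)$ for every $\ell$, viewing $G(\bbQ_\ell) \hookrightarrow G(\Af)$. The elements of $\Ker(q) \cap K_0$ supported at the single place $\ell$ therefore project into $\Image(\phi_\ell)$, giving $\gamma\bigl(\tilde G(\bbQ_\ell)\bigr) \cap K_{0,\ell} \subset \Image(\phi_\ell)$. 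For all but finitely many $\ell$ one has $K_{0,\ell} = \cG(\bbZ_\ell)$ with $\cG_{\bbZ_\ell}$ reductive with connected fibres, and the valuative criterion applied to the finite morphism $\tilde\cG_{\bbZ_\ell} \to \cG^\der_{\bbZ_\ell}$ identifies this intersection with $\gamma\bigl(\tilde\cG(\bbZ_\ell)\bigr)$; by standard structure theory of reductive group schemes over $\bbZ_\ell$ the latter equals the commutator subgroup of $\cG(\bbZ_\ell)$ for almost all $\ell$. This gives~(ii).

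For the uniform index bound in~(i) I would use the first part of Theorem~\ref{thm:Coker(rec)}: if $m$ is an exponent of $\Coker(\rec)$, then $m \cdot \bar\pi_0\pi(G) \subset \Image(\rec)$, whence $q^{-1}\bigl(\Image(\rec)\bigr) \supset G(\Af)^{(m)} \cdot \Ker(q)$ (writing $G(\Af)^{(m)}$ for the subgroup generated by $m$-th powers), and so
\[
\Image(\phi) \;\supset\; K_0^{(m)} \cdot \bigl(\Ker(q) \cap K_0\bigr) .
\]
Projecting to the $\ell$-part, for almost all $\ell$ we get $\Image(\phi_\ell) \supset \cG(\bbZ_\ell)^{(m)} \cdot \gamma\bigl(\tilde\cG(\bbZ_\ell)\bigr)$, so it suffices to bound $\bigl[\cG(\bbZ_\ell) : \cG(\bbZ_\ell)^{(m)}\,\gamma(\tilde\cG(\bbZ_\ell))\bigr]$ independently of $\ell$. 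Passing to the quotient by $\gamma(\tilde\cG(\bbZ_\ell))$, this reduces — up to a factor bounded by $\bigl|\Ker(\gamma)\bigr|$ — to bounding $\bigl[\cT(\bbZ_\ell) : m \cdot \cT(\bbZ_\ell)\bigr]$ with $\cT = \cG/\cG^\der$, which is $O_m(1)$ uniformly in $\ell$ by the structure of the $\bbZ_\ell$-points of a torus. The finitely many remaining primes each contribute a finite index — at every $\ell$, $\Image(\phi_\ell)$ contains an open subgroup of $\gamma\bigl(\tilde G(\bbQ_\ell)\bigr)$ and surjects onto a finite-index subgroup of $\cT(\bbZ_\ell)$ — so taking $N$ to be the maximum yields~(i).

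The hard input here, Theorem~\ref{thm:Coker(rec)}, is taken for granted; modulo it, the only real obstacle is the uniformity in $\ell$ in part~(i). This rests on two points needing care: that one may pass cleanly between the adelic image $\Image(\phi)$ and its $\ell$-primary projections — which works precisely because $\Ker(q)$ absorbs the full semisimple direction $\gamma\bigl(\tilde G(\Af)\bigr)$, so elements supported at one prime behave well — and the comparison, for almost all $\ell$, of $\cG(\bbZ_\ell)$ with its commutator subgroup, the image of $\tilde\cG(\bbZ_\ell)$, and its torus quotient $\cT(\bbZ_\ell)$, together with the elementary but fiddly estimate $\bigl[\cT(\bbZ_\ell):m\cT(\bbZ_\ell)\bigr]=O_m(1)$. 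I expect this latter bundle of facts to require the most bookkeeping, in particular the non-split torus case and pinning down the exact finite set of excluded primes.
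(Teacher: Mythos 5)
Part (iii) is handled exactly as in the paper: Proposition~\ref{prop:Im(phi)Step1} plus the openness assertion in Theorem~\ref{thm:Coker(rec)}. For parts (i) and (ii), however, you take a genuinely different route, and in both cases the paper's own argument is shorter and avoids a pitfall you run into.

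For (ii), the paper never touches strong approximation or the structure of $\gamma\bigl(\tilde\cG(\bbZ_\ell)\bigr)$. The whole point is that the target $\bar\pi_0\pi(G)$ of $q$ is \emph{abelian}, so $\Ker(q)$ automatically contains the closed commutator subgroup of $G(\Af)$; combined with Proposition~\ref{prop:Im(phi)Step1} this gives $[K_0,K_0]\subset\Ker(q)\cap K_0\subset\Image(\phi)$, and since $K_{0,\ell}=\cG(\bbZ_\ell)$ for almost all $\ell$, one is done. Your route instead passes through strong approximation for $\tilde G$, the valuative criterion for $\tilde\cG\to\cG$, and the assertion that $\gamma\bigl(\tilde\cG(\bbZ_\ell)\bigr)=\bigl[\cG(\bbZ_\ell),\cG(\bbZ_\ell)\bigr]$ for almost all $\ell$, which you dismiss as ``standard structure theory''. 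I would push back on that. The inclusion $\gamma\bigl(\tilde\cG(\bbZ_\ell)\bigr)\subset\bigl[\cG(\bbZ_\ell),\cG(\bbZ_\ell)\bigr]$ does follow from perfectness of $\tilde\cG(\bbZ_\ell)$ for large $\ell$, but the reverse inclusion amounts to the vanishing of a commutator pairing $\cG^\ab(\bbZ_\ell)\times\cG^\ab(\bbZ_\ell)\to H^1(\bbZ_\ell,\mu)$ induced by the central extension $\tilde\cG\to\cG^\der$ (where $\mu=\Ker(\gamma)$), and I do not think this is obvious; it is precisely the kind of subtlety the abelianness argument sidesteps entirely. As written, this is a gap, although your strong-approximation step does give the unconditional inclusion $\gamma\bigl(\tilde\cG(\bbZ_\ell)\bigr)\subset\Image(\phi_\ell)$, which is a different and weaker conclusion than (ii).

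For (i), the two proofs really diverge. You argue via $\Image(\phi)\supset K_0^{(m)}\cdot\bigl(\Ker(q)\cap K_0\bigr)$ and then bound $\bigl[\cG(\bbZ_\ell):\cG(\bbZ_\ell)^{(m)}\,\gamma(\tilde\cG(\bbZ_\ell))\bigr]$ by passing to the torus quotient; this is workable, but exactly the reduction steps you flag as ``fiddly'' (the factor bounded by $|\Ker(\gamma)|$, the handling of the finitely many exceptional primes by strong approximation) are left loose. The paper does something cleaner: for each $\ell$, $K_{0,\ell}/\Image(\phi_\ell)$ is a topologically finitely generated abelian profinite group killed by $m$, hence finite, with no semisimple input at all; and for $\ell>M$ large with $\ell\nmid m$, $\Image(\phi_\ell)$ automatically contains every pro-$\ell$ Sylow subgroup of $\cG(\bbZ_\ell)$, hence the full preimage of $\cG_0(\bbF_\ell)^+$, after which the index is bounded via $H^1(\bbF_\ell,\mu)$ and the $m$-torsion of $\cG_0^\ab(\bbF_\ell)$. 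This avoids strong approximation and the valuative criterion altogether.

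In short: (iii) matches the paper; (i) is a legitimate but heavier alternative; and (ii) rests on an unjustified and possibly false structural identity. The observation you miss, which collapses (ii) to one line and streamlines (i), is simply that $\bar\pi_0\pi(G)$ is abelian.
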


\begin{proof}
In view of Proposition~\ref{prop:Im(phi)Step1}, (\romannumeral3) of the corollary is immediate from the second assertion in the theorem. For~(\romannumeral2) we only have to note that $\Image(\phi)$ contains the commutator subgroup of~$K_0$ and that $K_{0,\ell} = \cG(\bbZ_\ell)$ for almost all~$\ell$.

It remains to deduce~(\romannumeral1) from the theorem. With the notation of~\ref{ssec:rechom}, $q^{-1}\bigl(\Image(\rec)\bigr)$ is a normal subgroup of~$G(\Af)$, with profinite abelian quotient $G(\Af)/q^{-1}\bigl(\Image(\rec)\bigr) \cong \Coker(\rec)$. If $m$ is the exponent of $\Coker(\rec)$, it follows that $K_{0,\ell}/\Image(\phi_\ell)$ is a compact abelian $\ell$-adic analytic group that is killed by~$m$. Hence it is finite. It follows that $\Image(\phi_\ell)$ has finite index in~$\cG(\bbZ_\ell)$ for all~$\ell$, and so it suffices to prove~(\romannumeral1) for all~$\ell$ sufficiently large. 

Choose a multiple~$M$ of~$m$ such that $\cG$ is reductive over~$\bbZ[1/M]$ and $K_{0,\ell} = \cG(\bbZ_\ell)$ for all $\ell > M$. Fix an $\ell > M$, and let $\cG_0 = \cG \otimes \bbF_\ell$ denote the characteristic~$\ell$ fibre of~$\cG$. Let $\tilde{\cG}_0 \to \cG_0^\der$ be the simply connected cover of the derived subgroup. The image of $\tilde{\cG}_0(\bbF_\ell) \to \cG_0^\der(\bbF_\ell)$ is the normal subgroup $\cG_0(\bbF_\ell)^+ \triangleleft\, \cG_0(\bbF_\ell)$ that is generated by the $\ell$-Sylow subgroups. 

Still with $\ell > M$, the image of~$\phi_\ell$ contains the subgroup of~$\cG(\bbZ_\ell)$ generated by the $\ell$-Sylow groups, and hence contains all elements $g \in \cG(\bbZ_\ell)$ whose reduction modulo~$\ell$ lies in $\cG_0(\bbF_\ell)^+$. It therefore suffices to bound the $m$-torsion in $\cG_0(\bbF_\ell)/\cG_0(\bbF_\ell)^+$ by a constant independent of~$\ell$. As a first step, let $\mu$ be the kernel of $\tilde{\cG}_0 \to \cG_0^\der$; this is a group of multiplicative type whose rank~$|\mu|$ only depends on~$G$. (If $R$ is the absolute rank of~$G$, it is known that $\mu$ has rank at most~$2^R$.) The quotient $\cG_0^\der(\bbF_\ell)/\cG_0(\bbF_\ell)^+$ injects into $H^1(\bbF_\ell,\mu)$, which is a quotient of~$\mu(\overline{\bbF}_\ell)$ (cohomology of procyclic groups). In particular, $\bigl[\cG_0^\der(\bbF_\ell):\cG_0(\bbF_\ell)^+\bigr]$ divides~$|\mu|$. It therefore suffices to bound the $m$-torsion in $\cG_0(\bbF_\ell)/\cG_0^\der(\bbF_\ell)$ by a constant independent of~$\ell$. Writing $\cG_0^\ab = \cG_0/\cG_0^\der$, the group $\cG_0(\bbF_\ell)/\cG_0^\der(\bbF_\ell)$ is a subgroup of~$\cG_0^\ab(\bbF_\ell)$. Further, if $r$ is the rank of the torus~$G^\ab$, the kernel of multiplication by~$m$ on~$\cG_0^\ab$ is a finite group scheme of rank~$r^m$; hence the $m$-torsion in $\cG_0^\ab(\bbF_\ell)$ has cardinality at most~$r^m$.
\end{proof}

We now turn to the proof of Theorem~\ref{thm:Coker(rec)}. In \ref{ssec:PfTorus} and~\ref{ssec:Gdersc} we first prove the result in two special cases; the general case is then deduced in~\ref{ssec:EndPf}. To prove that $\Coker(\rec)$ has finite exponent (resp., is finite), we use Deligne's group-theoretic description of the reciprocity map. (See \cite{DelShimura}, Th\'eor\`eme~2.6.3.) If $E = E(G,X)$ is the reflex field, we simply write~$E^*$ for the $\bbQ$-torus $\Res_{E/\bbQ}\, \bbG_{\mult,E}$. Class field theory gives an isomorphism $\pi_0\pi(E^*) \isomarrow \Gal(E^\ab/E)$, which we normalize as in \cite{DelShimura},~0.8.

We start with a general remark that is useful to us.

\begin{remark}
\label{rem:PlatRapin}
Let $f \colon G_1 \to G_2$ be a surjective homomorphism of reductive $\bbQ$-groups. Factor~$f$ as 
\[
G_1 \twoheadrightarrow G_2^\prime \xrightarrow{\psi} G_2\, ,
\] 
where $G_2^\prime = G_1/\Ker(f)^0$. We make the simplifying assumption that $\Ker(\psi)$ (which is the finite \'etale group scheme $\pi_0\bigl(\Ker(f)\bigr)$) is commutative, as this is the only case we need.

By \cite{PlatonRapin}, Proposition~6.5, $G_1(\bbA) \to G_2^\prime(\bbA)$ has open image. The commutativity of~$\Ker(\psi)$ implies that the image of $\psi(\bbA) \colon G_2^\prime(\bbA) \to G_2(\bbA)$ is a normal subgroup.  Further, if $M$ is the rank of $\Ker(\psi)$ then $H^1\bigl(\Spec(\bbA)_\text{\'et},\Ker(\psi)\bigr)$, and hence also the cokernel of~$\psi(\bbA)$, is killed by~$M$.
\end{remark}

\subsection{The case of a torus.} 
\label{ssec:PfTorus}
Suppose $G = T$ is a torus, in which case $X = \{h\}$ is a singleton. By definition of the reflex field~$E$, the corresponding cocharacter $\mu\colon \bbG_{\mult,\bbC} \to T_\bbC$ is defined over~$E$; so we have a homomorphism $\mu\colon \bbG_{\mult,E} \to T_E$. Restricting scalars to~$\bbQ$ and composing with the norm map then gives a homomorphism of algebraic tori
\[
\nu \colon E^* \xrightarrow{\Res(\mu)} \Res_{E/\bbQ} T_E \xrightarrow{\Norm_{E/\bbQ}} T\, .
\]
Via the class field isomorphism, the reciprocity map~\eqref{eq:rec} is the inverse of the composition
\[
\pi_0\pi(E^*) \xrightarrow{\,\pi_0\pi(\nu)\,} \pi_0\pi(T) \xrightarrow{~\pr~} \bar\pi_0\pi(T)\, .
\]

We apply what was explained in~\ref{ssec:pi1Borov}, taking $M=T$. In this case $\pi_1(T)$ is just the cocharacter group $X_*(T)$. With $\Gamma=\Gal(\Qbar/\bbQ)$, the assumption that $T$ is the Mumford-Tate group of~$h$ means that $\bbZ[\Gamma] \cdot \mu$ has finite index in~$X_*(T)$. Further, $(T,\{h\})$ is maximal if and only if $X_*(T)$ is generated by~$\mu$ as a $\bbZ[\Gamma]$-module. 

On the other hand, by definition of the reflex field~$E$, the stabilizer of $\mu \in X_*(T)$ in~$\Gamma$ is precisely the subgroup $\Gamma_E = \Gal(\Qbar/E) \subset \Gamma$. If $\mathfrak{a} \subset \bbZ[\Gamma]$ is the left ideal generated by the augmentation ideal of~$\bbZ[\Gamma_E]$, we have $X_*(E^*) \cong \bbZ[\Gamma]/\mathfrak{a}$ as Galois modules, and $X_*(\nu) \colon X_*(E^*) \to X_*(T)$ is the map given by $(\gamma \bmod \mathfrak{a}) \mapsto \gamma \cdot \mu$. {}From the preceding remarks it therefore follows that $\nu$, as a homomorphism of algebraic tori, is surjective, and that $\Ker(\nu)$ is connected if $(T,\{h\})$ is Hodge-maximal. The assertion of~\ref{thm:Coker(rec)} now follows from Remark~\ref{rem:PlatRapin}.

\subsection{}
\label{ssec:recFunct}
Let $f\colon (G_1,X_1) \to (G_2,X_2)$ be a morphism of Shimura data. Let $E_i$ ($i=1,2$) be the reflex field of $(G_i,X_i)$, and denote by $E_i^\ab$ its maximal abelian extension. Then $E_1$ is a finite extension of~$E_2$ in~$\bbC$ and we have a commutative diagram
\begin{equation}
\label{eq:G1X1->G2X2}
\begin{tikzpicture}[baseline=(current bounding box.center)]
\node (A) at (0,1.8) {$\Gal(E_1^\ab/E_1)$};
\node (B) at (5.5,1.8) {$\bar\pi_0\pi(G_1) \cong  G_1(\Af)/G_1(\bbQ)_+^-$};
\node (C) at (0,0) {$\Gal(E_2^\ab/E_2)$};
\node (D) at (5.5,0) {$\bar\pi_0\pi(G_2) \cong G_2(\Af)/G_2(\bbQ)_+^-$};
\node (Bshift) at (6.2,1.8) {$\phantom{\bar\pi_0\pi(G_1) \cong  G_1(\Af)/G_1(\bbQ)_+^-}$};
\node (Dshift) at (6.2,0) {$\phantom{\bar\pi_0\pi(G_2) \cong G_2(\Af)/G_2(\bbQ)_+^-}$};
\draw[->] (A) -- (C) node[pos=.5,left] {};
\draw[->] (A) -- (B) node[pos=.5,above] {$\scriptstyle \rec_{(G_1,X_1)}$};
\draw[->] (C) -- (D) node[pos=.5,above] {$\scriptstyle \rec_{(G_2,X_2)}$};
\draw[->] (Bshift) -- (Dshift) node[pos=.48,right] {$\scriptstyle \bar{f}$};
\end{tikzpicture}
\end{equation}
in which $\bar{f}$ denotes the map induced by~$f$. The image of the left vertical map is a subgroup of finite index in $\Gal(E_2^\ab/E_2)$.

\subsection{The case when $G^\der$ is simply connected.} 
\label{ssec:Gdersc}
Next we treat the case when the derived group~$G^\der$ is simply connected. Let $G^\ab = G/G^\der$ and let $p \colon G \to G^\ab$ be the canonical map. Then $h^\ab = p \circ h$ is independent of $h \in X$, and $(G^\ab,\{h^\ab\})$ is a Shimura datum. By Remark~\ref{rem:MorphShim}(\romannumeral2), if $(G,X)$ is maximal, so is $(G^\ab,X^\ab)$. We apply~\ref{ssec:recFunct} to the morphism $p \colon (G,X) \to (G^\ab,\{h^\ab\})$. By \cite{DelTravShim}, Th\'eor\`eme~2.4, the right vertical map in the diagram is surjective with finite kernel. (Deligne's result says that $p$ induces an isomorphism on the groups that we denote by~$\pi_0\pi$; the groups $\bar\pi_0\pi$ are quotients of these by finite subgroups.) The theorem for $(G,X)$ therefore follows from the result for $(G^\ab,X^\ab)$, which was proven in~\ref{ssec:PfTorus}.

\begin{lemma}
\label{lem:CoveringGX}
Let $(G,X)$ be a Shimura datum as in~\textup{\ref{ssec:ShimRepDef}}. Then there exists a Shimura datum $(\tilde{G},\tilde{X})$ and a morphism $f \colon (\tilde{G},\tilde{X}) \to (G,X)$ such that
\begin{enumerate}[label=\textup{(\alph*)}]
\item the group $\tilde{G}$ is the generic Mumford-Tate group on~$\tilde{X}$;
\item the homomorphism $f \colon \tilde{G} \to G$ is surjective, and the induced $f^\der\colon \tilde{G}^\der \to G^\der$ is the simply connected cover of~$G^\der$.
\end{enumerate}
Moreover, if $(G,X)$ is maximal, we can choose $(\tilde{G},\tilde{X})$ such that it is maximal, too, and such that the kernel of $f \colon \tilde{G} \to G$ is connected.
\end{lemma}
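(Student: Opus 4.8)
The plan is to build $(\tilde G,\tilde X)$ directly as a fibre product, combining the simply connected cover of the derived group with the connected cover that makes the cocharacter class maximal. First I would construct the group $\tilde G$: let $\gamma\colon G^{\mathrm{sc}}\to G^\der$ be the simply connected cover, let $G^\ab=G/G^\der$ with $p\colon G\to G^\ab$ the projection, and set $\tilde G$ to be the identity component of the fibre product $G^{\mathrm{sc}}\times_{G^\ab}G$, where $G^{\mathrm{sc}}\to G^\ab$ is the constant map to~$e$ (equivalently, $\tilde G = G^{\mathrm{sc}}\times Z^0$ where $Z$ is a suitable cover of the centre, matched over the relevant finite group scheme). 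More cleanly: take $\tilde G\to G$ to be the isogeny determined, via the dictionary of~\ref{ssec:pi1Borov} applied with $M=G$ and $\Gamma=\Gal(\Qbar/\bbQ)$, by the $\bbZ[\Gamma]$-submodule $\bbZ[\Gamma]\cdot[\cC(G,X)]\subset Q(R^\vee)+\bbZ[\Gamma]\cdot[\cC]$ inside $X_*(G)$ — that is, the submodule spanned by the coroot lattice together with the lift of the cocharacter class. This is a sublattice of finite index (by the argument recalled in~\ref{ssec:HmaxGX}, since $G$ is the generic Mumford-Tate group on~$X$), hence corresponds to a unique isogeny $f\colon\tilde G\to G$ of connected $\bbQ$-groups.

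Next I would produce the Shimura datum structure $\tilde X$. By construction the cocharacter class $\cC(G,X)$ lifts to a conjugacy class $\tilde\cC$ of cocharacters of~$\tilde G$; lifting the Hodge parameters $h\colon\bbS\to G_\bbR$ correspondingly (using that $f$ is an isogeny, so $f_\bbR$ is a covering of real groups and the $h\in X$ lift to homomorphisms $\bbS\to\tilde G_\bbR$ once the cocharacter lifts — one checks the lifts can be chosen compatibly to form a $\tilde G(\bbR)$-conjugacy class, possibly a union of several over each component of~$X$) gives a set~$\tilde X$. I would verify the Shimura datum axioms (2.1.1.1–3 of \cite{DelShimura}): the Hodge and polarization conditions (2.1.1.1–2) are inherited from~$X$ because $f^\ad$ is an isomorphism, so the adjoint data are literally the same; and condition (2.1.1.3), that $\mathrm{ad}\circ h$ has no $\bbQ$-factor on which the projection of~$h$ is trivial, likewise transfers. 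That $\tilde G$ is the generic Mumford-Tate group on~$\tilde X$ is then Remark~\ref{rem:MorphShim}(\romannumeral1), since $f$ is an isogeny and $G$ is generic on~$X$. For~(b): by design $\pi_1(\tilde G)=\bbZ[\Gamma]\cdot[\cC]+Q(R^\vee)$ contains $Q(R^\vee)/Q(R^\vee)$... more precisely $\tilde G^\der$ has $\pi_1$ equal to the image of the coroot lattice, i.e. is simply connected, and $f^\der\colon\tilde G^\der\to G^\der$ is the simply connected cover.

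For the "moreover" clause, suppose $(G,X)$ is maximal, i.e. $\bbZ[\Gamma]\cdot[\cC(G,X)]=\pi_1(G)$. Then I want $(\tilde G,\tilde X)$ maximal and $\Ker(f)$ connected. Maximality of $(\tilde G,\tilde X)$: since $f^\ad$ is an isomorphism, $\cC(\tilde G,\tilde X)$ maps onto $\cC(G,X)$, and $\pi_1(\tilde G)\to\pi_1(G)$ is injective with image $\bbZ[\Gamma]\cdot[\cC(G,X)]+Q(R^\vee)=\pi_1(G)$ (using maximality of $(G,X)$), hence an isomorphism, and it carries $[\cC(\tilde G,\tilde X)]$ to a generator; so $\bbZ[\Gamma]\cdot[\cC(\tilde G,\tilde X)]=\pi_1(\tilde G)$, which is maximality. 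Actually this computation also shows $\Ker(f)$ is trivial in the maximal case if one uses the same lattice — so I should instead, when $(G,X)$ is maximal, take the larger cover with $\pi_1(\tilde G)=$ the preimage in $X_*(\tilde G^{\text{univ}})$... Let me restate: in general choose the isogeny with $\pi_1(\tilde G)=Q(R^\vee)$ only if that contains $[\cC]$; otherwise one is forced to enlarge. The clean statement is: take $f\colon\tilde G\to G$ with $\pi_1(\tilde G)$ equal to $Q(R^\vee)$ when $[\cC]\in Q(R^\vee)$, and in the maximal case show $[\cC]$ generates, so $Q(R^\vee)\subset\bbZ[\Gamma][\cC]$ forces... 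The honest route: in the maximal case, replace the chosen lattice by $Q(R^\vee)$ itself — but we need $[\cC]$ in it. So instead argue that when $(G,X)$ is maximal we may replace $\tilde G$ by $G^{\mathrm{sc}}\times (\text{universal cover of }G^\ab)$ fibred appropriately so that $\Ker(f)$ is connected; concretely, $\Ker(f)$ is of multiplicative type, and its component group is $\pi_1(G)/\bbZ[\Gamma][\cC]=0$ by maximality of $(G,X)$, hence $\Ker(f)$ is a torus, hence connected. This is the cleanest argument and it is exactly the point where maximality is used.

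I expect the main obstacle to be the construction of~$\tilde X$ as a genuine conjugacy class satisfying Deligne's axioms — in particular, checking that the cocharacter lift can be promoted to a lift of~$h$ over the full component (the ambiguity is by an element of $H^1$ of the Galois group valued in the finite central kernel, and one must see it vanishes or can be absorbed) and that axiom (2.1.1.3) survives. Once the datum is in hand, properties~(a), (b) and the connectedness of $\Ker(f)$ in the maximal case are short lattice-theoretic checks via~\ref{ssec:pi1Borov}.
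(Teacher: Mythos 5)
Your central move---building $f\colon\tilde G\to G$ as an \emph{isogeny} via the dictionary of~\ref{ssec:pi1Borov}, with $\pi_1(\tilde G)=\bbZ[\Gamma]\cdot[\cC]$---cannot work, and you in fact stumble onto the reason yourself in the ``moreover'' discussion. Recall that $\pi_1(\tilde G^\der)$ is the torsion subgroup of $\pi_1(\tilde G)$, so $\tilde G^\der$ is simply connected if and only if the finite-index submodule $W=\bbZ[\Gamma]\cdot[\cC]\subset\pi_1(G)$ is torsion-free. There is no reason for this to hold, and when $(G,X)$ is already maximal the isogeny you produce is the identity $G\to G$, so condition~(b) then demands that $G^\der$ itself be simply connected---which is false in exactly the situation this paper cares about (for K3 surfaces $G$ is an even special orthogonal group, with $\pi_1(G^\der)\cong\bbZ/2\bbZ$). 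More generally, one cannot simultaneously lift~$[\cC]$ and kill the torsion of $\pi_1$ by an isogeny alone; the lemma deliberately only requires $f$ to be \emph{surjective}, and in general $\Ker(f)$ must be a positive-dimensional torus to make room for both constraints. Your alternating attempts to set ``$\pi_1(\tilde G)=Q(R^\vee)$'' also conflate submodules of $X_*(G)$ with submodules of the quotient $\pi_1(G)=X_*/Q(R^\vee)$, which is part of what derails the argument.

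The missing idea is a black box the paper cites: Milne--Shih \cite{MilShih}, Application~3.4, which produces a morphism of Shimura data $f_1\colon(G_1,\tilde X)\to(G,X)$ with $f_1$ surjective, $\Ker(f_1)$ a torus, and $G_1^\der\to G^\der$ the simply connected cover. This immediately dispenses with the other obstacle you (rightly) flagged, namely checking Deligne's axioms for~$\tilde X$, since Milne--Shih already hands you the Shimura datum. One then takes $\tilde G\subset G_1$ to be the generic Mumford--Tate group on~$\tilde X$; the hypothesis that $G$ is generic on~$X$ gives surjectivity of $f=f_1|_{\tilde G}$, and normality of $\tilde G$ in~$G_1$ gives $\tilde G^\der=G_1^\der$, so (a) and~(b) hold. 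For the ``moreover'' clause, your final observation is close to the paper's argument but is phrased imprecisely: rather than the unproved formula $\pi_0\bigl(\Ker(f)\bigr)=\pi_1(G)/\bbZ[\Gamma][\cC]$, the paper argues directly that $(G_2,X_2)=(\tilde G,\tilde X)/\Ker(f)^0\to(G,X)$ is an isogeny of Shimura data with kernel $\pi_0\bigl(\Ker(f)\bigr)$, which maximality of $(G,X)$ forces to be trivial; after this one passes to a maximal isogeny cover of $(\tilde G,\tilde X)$ via Remark~\ref{rem:MorphShim}(\romannumeral3), using that this cannot change the (already simply connected) derived group.
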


\begin{proof}
By \cite{MilShih}, Application~3.4, there exists a morphism of Shimura data $f_1 \colon (G_1,\tilde{X}) \rightarrow (G,X)$ such that $f_1 \colon G_1 \to G$ is surjective, $\Ker(f_1)$ is a torus, and $f_1^\der \colon G_1^\der \to G^\der$ is the simply connected cover. Let $\tilde{G} \subset G_1$ be the generic Mumford-Tate group on~$\tilde{X}$, and let $f\colon \tilde{G}\to G$ be the restriction of~$f_1$ to~$\tilde{G}$. The assumption that $G$ is the generic Mumford-Tate group on~$X$ implies that $f$ is surjective, and as $\tilde{G}$ is normal in~$G_1$, this implies that $\tilde{G}^\der = G_1^\der$. So $(\tilde{G},\tilde{X})$ and~$f$ satisfy (a) and~(b).

Next assume $(G,X)$ is maximal. We claim that for any $f \colon (\tilde{G},\tilde{X}) \to (G,X)$ such that (a) and~(b) hold, $\Ker(f)$ is connected (hence a torus). Indeed, let $(G_2,X_2)$ be the quotient of $(\tilde{G},\tilde{X})$ by $\Ker(f)^0$. We then have an induced morphism of Shimura data $(G_2,X_2) \to (G,X)$. The map $G_2 \to G$ is an isogeny with kernel the group scheme $\pi_0\bigl(\Ker(f)\bigr)$ of connected components of $\Ker(f)$. By maximality of $(G,X)$ this implies that $\pi_0\bigl(\Ker(f)\bigr)$ is trivial, which proves the claim.

The Shimura datum $(\tilde{G},\tilde{X})$ we have obtained need not be maximal. By Remark~\ref{rem:MorphShim}(\romannumeral3), there exists an isogeny $(\hat{G},\hat{X}) \to (\tilde{G},\tilde{X})$ with $(\hat{G},\hat{X})$ maximal, and by \ref{rem:MorphShim}(\romannumeral1) $\hat{G}$ is the generic Mumford-Tate group on~$\hat{X}$. As $\hat{G} \to \tilde{G}$ is an isogeny and $\tilde{G}^\der$ is simply connected, we have $\hat{G}^\der \isomarrow \tilde{G}^\der$, and as we have already seen, the maximality of $(G,X)$ implies that the kernel of $\hat{G} \to G$ is connected. 
\end{proof}

\subsection{}
\label{ssec:EndPf}
To complete the proof of Theorem~\ref{thm:Coker(rec)}, let $(G,X)$ be the Shimura datum considered in the assertion, and take $f\colon (\tilde{G},\tilde{X}) \to (G,X)$ as in Lemma~\ref{lem:CoveringGX}. As shown in~\ref{ssec:Gdersc}, the theorem is true for $(\tilde{G},\tilde{X})$. In view of diagram~\eqref{eq:G1X1->G2X2}, it suffices to show that the induced map $\tilde{G}(\Af) \to G(\Af)$ has cokernel of finite exponent, and that the image of this map is open if $(G,X)$ is maximal. This follows from Remark~\ref{rem:PlatRapin}.

\section{Galois-generic points}
\label{sec:GalGen}

\subsection{}
\label{ssec:Galsect}
Let $S$ be a geometrically connected scheme of finite type over a field~$k$. Assume given a continuous representation $\psi \colon \pi_1(S) \to G(\Af)$, where $G$ is an algebraic group over~$\bbQ$. (Throughout, we omit the choice of a geometric base point of~$S$ from the notation.) If $\ell$ is a prime number, we denote by $\psi_\ell \colon \pi_1(S) \to G(\bbQ_\ell)$ the $\ell$-primary component of~$\psi$.

If $y$ is a point of~$S$, we have a homomorphism $\sigma_y\colon \pi_1(y) \to \pi_1(S)$, well-determined up to conjugation. (Recall that $\pi_1(y)$ is the absolute Galois group of the residue field~$k(y)$.)

\begin{definition}
(\romannumeral1) A point $y \in S$ is said to be \emph{Galois-generic} with respect to~$\psi$ if the image of $\psi \circ \sigma_y$ is open in the image of~$\psi$.

(\romannumeral2) A point $y \in S$ is said to be \emph{$\ell$-Galois-generic} with respect to~$\psi$ if the image of $\psi_\ell \circ \sigma_y$ is open in the image of~$\psi_\ell$.
\end{definition}

If it is clear which $\psi$ we mean, we omit the phrase ``with respect to~$\psi$''.

Clearly, if a point~$y$ is Galois-generic, it is $\ell$-Galois generic for all~$\ell$. The following theorem by the first author and Kret (see~\cite{CadKret}, Theorem~1.1) shows that for the representation associated with a Shimura variety of abelian type, the converse holds.

\begin{theorem}
\label{thm:CadKret}
Let $(G,X)$ be a Shimura datum of abelian type, $K_0 \subset G(\Af)$ a neat compact open subgroup, $h_0\in X$ a base point. Let $\phi_{\bar{s}} \colon \pi_1(S_0) \to K_0$ be the associated representation, as defined in~\textup{\ref{ssec:ShimRepDef}}. If a point $y \in S$ is $\ell$-Galois-generic for some prime number~$\ell$ then $y$ is Galois-generic.
\end{theorem}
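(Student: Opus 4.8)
This is a result of Cadoret and Kret, so here I only sketch the structure of the argument; the full proof is in \cite{CadKret}. The statement is a rigidity phenomenon: for representations arising from Shimura data of abelian type, $\ell$-Galois-genericity at a single prime propagates to full adelic Galois-genericity. The plan is to reduce to the case of the standard representation on the Tate module of an abelian scheme, where the corresponding statement is the adelic open image theorem of the first author in~\cite{Cadoret}.

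\emph{Step 1: Reduce to a Siegel modular variety.} Since $(G,X)$ is of abelian type, there is a Shimura datum $(G_1, X_1)$ of Hodge type with $(G_1^{\ad}, X_1^{\ad}) = (G^{\ad}, X^{\ad})$ and an isogeny-type relation linking $S_0$ to a component of a Shimura variety of Hodge type; composing with a symplectic embedding $(G_1, X_1) \hookrightarrow (\CSp_{2g}, \mathfrak{H}_g)$ realizes a suitable component as a subvariety of a Siegel modular variety, i.e.\ the base of an abelian scheme $A \to S$ (after passing to a finite \'etale cover and shrinking $K_0$, which changes neither the notion of Galois-genericity nor openness of images up to finite index). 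Here one uses the functoriality of $\phi$ recorded in Remark~\ref{rem:phi}(i), together with the fact that the relevant maps on $\pi_1$ have open image, so that Galois-genericity (resp.\ $\ell$-Galois-genericity) of $y$ for $\phi$ is equivalent to the same property for the pulled-back Siegel representation at the image point.

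\emph{Step 2: Invoke the adelic open image theorem for abelian schemes.} For the universal-type abelian scheme $A \to S$, the first author proved in~\cite{Cadoret} that if the $\ell$-adic image of Galois at a point $y$ is open in the $\ell$-adic geometric monodromy image for \emph{one} prime $\ell$, then the adelic image of Galois at $y$ is open in the adelic geometric monodromy image. Translated through the $\pi_1$-formalism of Section~\ref{sec:AdelicRep}, this says precisely: $y$ is $\ell$-Galois-generic for the Siegel $\phi$ $\Longrightarrow$ $y$ is Galois-generic for the Siegel $\phi$. The content here is the independence-of-$\ell$ and the passage from the product of $\ell$-adic images to the adelic image (the subtlety being that openness of each $\ell$-adic factor plus almost-all-$\ell$ surjectivity does not a priori give openness of the product, and this is exactly what~\cite{Cadoret} supplies via a careful analysis using, among other things, results on the image of Galois modulo~$\ell$).

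\emph{Step 3: Descend back to $(G,X)$.} Transport the conclusion of Step~2 back along the maps of Step~1. Because each intermediate morphism of Shimura data induces a map with open image on $\pi_1$ (and correspondingly on the monodromy groups, using Remark~\ref{rem:PlatRapin} / \cite{PlatonRapin}), openness of the adelic Galois image inside the adelic monodromy image is preserved under these comparisons, so $y$ is Galois-generic for $\phi_{\bar s}$, as claimed.

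\emph{Main obstacle.} The crux is Step~2 — the adelic open image theorem of~\cite{Cadoret}. Everything else is bookkeeping with Shimura-datum functoriality and the fact that the various comparison maps have open image. The hard input is genuinely the GL$_n$-analytic and reduction-mod-$\ell$ machinery needed to upgrade $\ell$-adic openness at one prime to adelic openness; we simply cite it.
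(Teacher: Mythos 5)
The paper itself does not prove this theorem: it is cited verbatim from Cadoret--Kret, with the only commentary being the sentence in the introduction noting that the result ``is a consequence of the open adelic image theorem for abelian schemes proven by the first author in~\cite{Cadoret}.'' So there is no in-paper proof against which to compare a full argument, and your decision to give a structural sketch plus citation is exactly what the paper does. Your three-step outline is consistent with the paper's one-line description: the engine is \cite{Cadoret}, and the rest is the transport between $(G,X)$ and a Siegel setting.

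One caveat worth flagging in Step~1 and Step~3: the passage from a datum of abelian type to a datum $(G_1,X_1)$ of Hodge type is \emph{not} a morphism of Shimura data $(G,X)\to(G_1,X_1)$ to which Remark~\ref{rem:phi}(\romannumeral1) directly applies; the two data are only linked through the common adjoint $(G^{\ad},X^{\ad})$ and Deligne's theory of connected Shimura varieties, so the comparison happens at the level of connected components and covers, with the centers of $G$ and $G_1$ potentially quite different. Tracking how $\ell$-Galois-genericity and adelic Galois-genericity move across this bridge (in both directions) is genuinely the technical content of \cite{CadKret} on top of \cite{Cadoret}, and is more delicate than ``each intermediate morphism has open image on $\pi_1$.'' Since you explicitly label this a sketch and defer to the cited paper, this is not a gap so much as a point where the hand-waving hides the real work; but if you were to expand the argument, Step~1/Step~3 rather than Step~2 is where the new effort would go.
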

\section{Application to abelian varieties}
\label{sec:AV}

\subsection{}
\label{ssec:Ag}
For $g \geq 1$, equip $\bbZ^{2g}$ with the standard symplectic form, and let $\CSp_{2g}$ be the reductive group over~$\bbZ$ of symplectic similitudes of~$\bbZ^{2g}$. Let $\mathfrak{H}_g^\pm$ be the set of homomorphisms $h \colon \bbS \to \CSp_{2g,\bbR}$ that define a Hodge structure of type $(-1,0)+(0,-1)$ on~$\bbZ^{2g}$ for which $\pm 2\pi i \cdot \psi$ is a polarization. The real group $\CSp_{2g}(\bbR)$ acts transitively on~$\mathfrak{H}_g^\pm$, and the pair $(\CSp_{2g},\mathfrak{H}_g^\pm)$ is a Shimura datum with reflex field~$\bbQ$.

Let $K(3) \subset \CSp_{2g}(\Zhat)$ be the subgroup of elements that reduce to the identity modulo~$3$. Then $\Sh_{K(3)}(\CSp_{2g},\mathfrak{H}_g^\pm)$ is isomorphic (over~$\bbQ$) to the moduli space~$\cA_{g,3}$ of $g$-dimensional principally polarized abelian varieties with a (Jacobi) level~$3$ structure. See for instance \cite{DelTravShim}, Section~4. In what follows we identify the two schemes.

Let $(B,\lambda)$ be a principally polarized abelian variety of dimension~$g$ over a subfield $k \subset \bbC$ that is finitely generated over~$\bbQ$. Assume that all $3$-torsion points of~$B$ are $k$-rational. (This implies that $\bbQ(\zeta_3) \subset k$.) Choose a similitude $i\colon H_1\bigl(B(\bbC),\bbZ\bigr) \isomarrow \bbZ^{2g}$. This gives a Hodge structure on~$\bbZ^{2g}$; let $h_0 \in \mathfrak{H}_g^\pm$ be the corresponding point. As in section~\ref{ssec:ShimRepDef}, $[h_0,eK(3)]$ defines a $\bbC$-valued point $\bar{t}_0 \in \cA_{g,3}(\bbC)$. The corresponding level~$3$ structure on $(B_\bbC,\lambda)$ is defined over~$k$, which means that $\bar{t}_0$ comes from a $k$-valued point $t_0 \in \cA_{g,3}(k)$ by composing it with the given embedding $k \hookrightarrow \bbC$.

Let $\cA_0 \subset \cA_{g,3} \otimes \bbQ(\zeta_3)$ be the irreducible component such that $t_0 \in \cA_0(k)$. This component is geometrically irreducible. The construction of~\ref{ssec:ShimRepDef} gives a representation
\[
\phi_{\bar{t}} \colon \pi_1(\cA_0,\bar{t}_0) \to K(3) \subset \GL_{2g}(\Zhat)\, .
\]
(Here, as in~\ref{ssec:ShimRepDef}, $\bar{t}$ refers to the compatible system of base points $(\bar{t}_K)$ obtained from~$h_0$.) The point~$t_0$ gives rise to a homomorphism $\sigma_{t_0} \colon \Gal(\kbar/k) \to \pi_1(\cA_0,\bar{t}_0)$ such that the composition with the projection $\pi_1(\cA_0,\bar{t}_0) \to \Gal\bigl(\Qbar/\bbQ(\zeta_3)\bigr)$ is the natural homomorphism $\Gal(\kbar/k) \to \Gal\bigl(\Qbar/\bbQ(\zeta_3)\bigr)$. Note that $\sigma_{t_0}$ is canonically defined, not only up to conjugation.

On the other hand, if we let $H = H_1\bigl(B(\bbC),\bbZ\bigr)$ then we may identify the full Tate module $\hat{H} = \varprojlim_n\, B[n]\bigl(\kbar\bigr)$ (limit over all positive integers~$n$, partially ordered by divisibility) with $H \otimes \Zhat$. Via the chosen similitude~$i$ we obtain an isomorphism $\hat{H} \isomarrow \Zhat^{2g}$. The natural Galois action on~$\hat{H}$ therefore gives a representation
\[
\rho_B \colon \Gal(\kbar/k) \to \GL_{2g}\bigl(\Zhat\bigr)\, .
\] 

The following result is an immediate consequence of the definitions and the modular interpretation of $\Sh(\CSp_{2g},\mathfrak{H}_g^\pm)$. See also \cite{UllYafQuebec}, Remark~2.8, as well as the next section, where we discuss the analogous (but slightly more involved) case of K3 surfaces.

\begin{proposition}
\label{prop:GalCompat}
The representations $\phi_{\bar{t}} \circ \sigma_{t_0}$ and~$\rho_B$ are the same.
\end{proposition}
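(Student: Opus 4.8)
The statement $\phi_{\bar t}\circ\sigma_{t_0}=\rho_B$ is a compatibility between two a priori different $\Zhat$-valued representations of $\Gal(\kbar/k)$, so the plan is to unwind both sides to a concrete level-$n$ statement and match them. Fix a positive integer $n$ (prime to nothing in particular, but with $3\mid n$ it is cleanest) and consider the finite étale cover $\cA_{g,n}\to\cA_{g,3}$ corresponding to the principal congruence subgroup $K(n)\subset K(3)$; by the modular interpretation, a point of $\cA_{g,n}$ lying over the point of $\cA_{g,3}$ classifying $(B_\bbC,\lambda)$ is the same as a symplectic level-$n$ structure, i.e.\ an isomorphism $B[n](\bbC)\isomarrow(\bbZ/n)^{2g}$ respecting the Weil pairing, up to the action of $K(3)/K(n)=\CSp_{2g}(\bbZ/n)'$. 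The representation $\phi_{K(n)}\colon\pi_1(\cA_0,\bar t_0)\to K(3)/K(n)$ is, by the very definition in~\ref{ssec:ShimRepDef}, the monodromy of this cover based at $\bar t_0=[h_0,eK(n)]$, which is the level-$n$ structure $i\bmod n\colon H_1(B(\bbC),\bbZ/n)\isomarrow(\bbZ/n)^{2g}$ coming from the chosen similitude $i$.

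Next I would identify how $\Gal(\kbar/k)$ acts on the fibre over $t_0$, i.e.\ compute $\phi_{K(n)}\circ\sigma_{t_0}$. Because $\sigma_{t_0}$ is the section attached to the $k$-rational point $t_0$, the action of $\gamma\in\Gal(\kbar/k)$ on the (finite, $k$-rational-index) fibre of $\cA_{g,n}\to\cA_{g,3}$ above $t_0$ is exactly the Galois action on the set of level-$n$ structures on $(B,\lambda)$ over $k$, which is the action on $\mathrm{Isom}^{\mathrm{sympl}}\bigl(B[n](\kbar),(\bbZ/n)^{2g}\bigr)$ induced by the Galois action on $B[n](\kbar)$. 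Under the étale–Betti comparison $B[n](\kbar)=\hat H/n\hat H\cong H/nH$ (compatibly with $i$, which gives $\hat H\isomarrow\Zhat^{2g}$), the chosen base level-$n$ structure is $i\bmod n$, and transporting the Galois action through it gives precisely $\rho_B\bmod n$: for $\gamma\in\Gal(\kbar/k)$, the unique element $g_\gamma\in\CSp_{2g}(\bbZ/n)$ with $\gamma\cdot(i\bmod n)=(i\bmod n)\circ g_\gamma^{-1}$ is the matrix of $\gamma$ acting on $B[n](\kbar)$ in the basis $i$, i.e.\ $\rho_B(\gamma)\bmod n$. Hence $\phi_{K(n)}\circ\sigma_{t_0}=\rho_B\bmod n$ for every $n$, and passing to the inverse limit over $n$ (divisibility) yields $\phi_{\bar t}\circ\sigma_{t_0}=\rho_B$.

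The technical heart, and the one potential source of a sign or inverse error, is the bookkeeping in the previous paragraph: one must pin down the conventions in the presentation $\Sh_{K}(\CSp_{2g},\mathfrak H_g^\pm)(\bbC)=\CSp_{2g}(\bbQ)\backslash\mathfrak H_g^\pm\times\CSp_{2g}(\Af)/K$ so that the double-coset point $[h_0,eK]$ really corresponds to the level structure $i$ (not $i^{-1}$, and with the group acting on the correct side), and match this with the modular description of $\cA_{g,n}$ used to identify $\Sh_{K(3)}$ with $\cA_{g,3}$; this is exactly where one invokes the normalisations of~\cite{DelTravShim}, Section~4, and the remark that $\sigma_{t_0}$ is canonical (not merely up to conjugacy), so that no ambiguity is swept under the rug. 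Once the conventions are fixed consistently, everything is a direct unwinding: the level-$n$ monodromy of a Shimura cover at a point is tautologically the level-$n$ structure, and the Galois action on that structure at a rational point is tautologically the Galois action on the corresponding torsion of the abelian variety. I would therefore organise the write-up as: (1) recall the double-coset/modular dictionary at level $n$; (2) identify $\phi_{K(n)}$ with level-$n$ monodromy and the base point with $i\bmod n$; (3) identify $\sigma_{t_0}$-pullback with the Galois action on level-$n$ structures; (4) transport through $i$ to get $\rho_B\bmod n$; (5) take the limit. The analogous but "slightly more involved" K3 computation in the next section, to which the authors refer, can then be read as the model for how much detail to include.
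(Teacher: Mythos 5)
Your proposal is correct and takes essentially the same approach as the paper, which simply declares the compatibility to be ``an immediate consequence of the definitions and the modular interpretation of $\Sh(\CSp_{2g},\mathfrak{H}_g^\pm)$'' (referring the reader to \cite{UllYafQuebec}, Remark~2.8, and to the K3 discussion) without writing out the level-$n$ unwinding that you carry out. The only thing to tidy is the notation in step~(2): the base point at level $3$ is $\bar{t}_0=[h_0,eK(3)]$, and the distinguished point of the fibre of $\cA_{g,n}\to\cA_{g,3}$ over it is $\bar{t}_{K(n)}=[h_0,eK(n)]$, which is what corresponds to the level structure $i\bmod n$; you conflated the two symbols, but the substance of the argument is exactly what the paper asserts.
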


The main result of this section is that the usual form of the Mumford-Tate conjecture implies the integral and adelic versions of the Mumford-Tate conjecture as formulated in~\ref{conj:IMTC} and~\ref{conj:AMTC}.

\begin{theorem}
\label{thm:MainThmAV}
Let $B$ be an abelian variety over a subfield $k \subset \bbC$ that is finitely generated over~$\bbQ$. Assume that for some prime number~$\ell$ the Mumford-Tate conjecture for~$B$ is true. Then the integral and adelic Mumford-Tate conjectures for~$B$ are true as well.
\end{theorem}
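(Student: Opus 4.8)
The plan is to reduce Theorem~\ref{thm:MainThmAV} to the combination of Corollary~\ref{cor:Im(phi)}, Theorem~\ref{thm:CadKret} and Proposition~\ref{prop:GalCompat}, via the classical reduction to the principally polarized case. First I would reduce to the situation set up in~\ref{ssec:Ag}: after replacing $k$ by a finite extension (which changes neither the truth of the Mumford-Tate conjecture nor the assertions of~\ref{conj:IMTC} and~\ref{conj:AMTC}, since these are only required to hold after passing to $k^\conn$), we may assume $k=k^\conn$ and that all $3$-torsion of $B$ is $k$-rational. To handle a general abelian variety rather than a principally polarized one, I would use the standard trick: the Galois representation $\rho_B$ on the full Tate module is, up to commensurability and isogeny, controlled by that of $B\times B^\vee$ (or, after an isogeny, of a principally polarized abelian variety $B'$ isogenous to $B$). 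More precisely, one replaces $B$ by a principally polarized $(B',\lambda')$ with $B'$ isogenous to $B$; the Mumford-Tate groups of $B$ and $B'$ coincide as subgroups of $\GL$ of the respective rational Betti realizations, which are canonically identified, and the Mumford-Tate conjecture for one is equivalent to that for the other (Larsen-Pink). The images of $\rho_B$ and $\rho_{B'}$ are commensurable and, since the isogeny has bounded degree, all the boundedness statements transfer; I should be a little careful here to note that passing through an isogeny only changes indices by a bounded amount and that "contains the commutator subgroup and the homotheties for almost all $\ell$" is insensitive to this, as the isogeny is an isomorphism at all but finitely many~$\ell$.

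Granting this reduction, let $(G,X)=(\CSp_{2g},\mathfrak H_g^\pm)$, $K_0=K(3)$, take $\cH=\bbZ^{2g}$ with the embedding $i\colon G\hookrightarrow\GL_{2g,\bbQ}$, so $\cG=\CSp_{2g}$ over $\bbZ$; this is the Shimura datum of~\ref{ssec:Ag}, it is of abelian type, and $G$ is the generic Mumford-Tate group on $X$. Let $t_0\in\cA_0(k)$ correspond to the (principally polarized) $B$, and let $\phi=\phi_{\bar t}\colon\pi_1(\cA_0)\to K(3)$ be the associated adelic representation. By Proposition~\ref{prop:GalCompat}, $\phi\circ\sigma_{t_0}=\rho_B$. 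Now the hypothesis is that the Mumford-Tate conjecture for $B$ holds for some~$\ell$; by Larsen-Pink it holds for all~$\ell$. By Bogomolov's theorem (quoted in~\ref{ssec:H(Y)}) the image of $\rho_{B,\ell}=\phi_\ell\circ\sigma_{t_0}$ is open in $G_{\Betti}(\bbZ_\ell)$, hence open in $G(\bbZ_\ell)=\CSp_{2g}(\bbZ_\ell)$ since MTC identifies $G_{\Betti}\otimes\bbZ_\ell$ with $G_\ell^0$ and $G=\CSp_{2g}$ here; in particular, since $\Image(\phi_\ell)\subseteq K(3)_\ell\subseteq G(\bbZ_\ell)$, the image of $\phi_\ell\circ\sigma_{t_0}$ is open in $\Image(\phi_\ell)$. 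Thus $t_0$ is $\ell$-Galois-generic with respect to~$\phi$. By Theorem~\ref{thm:CadKret}, $t_0$ is then Galois-generic, i.e.\ $\Image(\rho_B)=\Image(\phi\circ\sigma_{t_0})$ is open in $\Image(\phi)$.

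It remains to combine this with Corollary~\ref{cor:Im(phi)} applied to $(\CSp_{2g},\mathfrak H_g^\pm)$. For the adelic conjecture~\ref{conj:AMTC}: the Hodge structure $H_1(B(\bbC),\bbQ)$ being Hodge-maximal is exactly the statement that the Shimura datum cut out by the Mumford-Tate group of $B$ is maximal; under this hypothesis I would need to transfer maximality from the sub-datum associated to $B$ back to a statement letting me apply Corollary~\ref{cor:Im(phi)}(iii) — here I should instead apply Corollary~\ref{cor:Im(phi)} to the Mumford-Tate Shimura datum of $B$ itself (a sub-Shimura datum of $(\CSp_{2g},\mathfrak H_g^\pm)$), for which $G$ is the generic Mumford-Tate group by construction and which is maximal by the Hodge-maximality hypothesis; the corresponding $\phi$ restricted to the appropriate component has open image in $G_\Betti(\Af)$, and the Galois-genericity of $t_0$ (which is preserved, being a property at the level of images) gives that $\Image(\rho_B)$ is open in $\Image(\phi)$, hence open in $G_\Betti(\Af)$. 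For the integral conjecture~\ref{conj:IMTC}: parts~(i) and~(ii) of Corollary~\ref{cor:Im(phi)} give that $[G_\Betti(\bbZ_\ell):\Image(\phi_\ell)]$ is bounded uniformly in~$\ell$ and that $\Image(\phi_\ell)$ contains the commutator subgroup of $G_\Betti(\bbZ_\ell)$ for almost all~$\ell$; combined with the boundedness of $[\Image(\phi_\ell):\Image(\rho_{B,\ell})]$ — which follows from Galois-genericity — we get a uniform bound on $[G_\Betti(\bbZ_\ell):\Image(\rho_{B,\ell})]$. The statement about homotheties $c^{-1}\cdot\id$ (the weight here is $i-2n=-1$) for almost all~$\ell$ follows because these lie in the center of $G_\Betti(\bbZ_\ell)$ and in fact in the image of $\rho_{B,\ell}$ by the theory of the cyclotomic character acting on the Tate module, together with the almost-all-$\ell$ surjectivity onto the abelianization provided by Galois-genericity; I would spell this out using that $G_\Betti\supseteq\bbG_\mult\cdot\id$ and that the determinant of $\rho_{B,\ell}$ is a fixed power of the cyclotomic character. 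The main obstacle I anticipate is not any single deep input — all of those are quoted — but the careful bookkeeping of the reduction from an arbitrary abelian variety to the principally polarized, $3$-level-rigidified, $k=k^\conn$ situation, ensuring that all the uniform-in-$\ell$ and almost-all-$\ell$ statements are genuinely preserved under the finite base change and the isogeny; in particular I must check that the Galois-genericity of $t_0$ with respect to the large Shimura datum implies Galois-genericity with respect to the Mumford-Tate sub-datum, which requires comparing the representation $\phi$ for $(\CSp_{2g},\mathfrak H_g^\pm)$ with the one for the sub-datum via the functoriality of Remark~\ref{rem:phi}(i).
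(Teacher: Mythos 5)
Your overall plan (reduce to a principally polarized $B$ with level $3$ over $k=k^\conn$, link $\rho_B$ to a Shimura representation $\phi$, invoke Bogomolov and Theorem~\ref{thm:CadKret} to get Galois-genericity, then finish with Corollary~\ref{cor:Im(phi)}) is the same route the paper takes, and the reduction steps are fine. But the middle step has a genuine gap. You claim that $t_0\in\cA_0(k)$ is $\ell$-Galois-generic with respect to the representation $\phi$ attached to the \emph{big} datum $(\CSp_{2g},\mathfrak{H}_g^\pm)$, arguing that $\Image(\rho_{B,\ell})$ is open in $G_\Betti(\bbZ_\ell)$, ``hence open in $\CSp_{2g}(\bbZ_\ell)$ since $G=\CSp_{2g}$ here''. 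That inference is false unless $G_\Betti=\CSp_{2g}$: in general $G_\Betti$ is a proper subgroup, so $G_\Betti(\bbZ_\ell)$ is a closed subgroup of infinite index in $\CSp_{2g}(\bbZ_\ell)$, and $\Image(\rho_{B,\ell})\subset G_\Betti(\bbZ_\ell)$ cannot be open in $\Image(\phi_\ell)$, which by Corollary~\ref{cor:Im(phi)}(i) is commensurable with $\CSp_{2g}(\bbZ_\ell)$. You then try to ``transfer'' Galois-genericity from the big datum to the Mumford-Tate sub-datum, but that transfer is pointed in the wrong direction: the Galois-genericity with respect to the big datum is exactly the assertion that fails.

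The fix — which is what the paper does — is to never invoke Cadoret-Kret on the big datum at all. Form the sub-datum $(G_\Betti,X)$ with $X$ the $G_\Betti(\bbR)$-orbit of $h_0$, set $K_0=f^{-1}(K(3))$, build $S_0\subset\Sh_{K_0}(G_\Betti,X)$ and the point $s_0\in S_0(k)$ lying over $t_0$, and use the functoriality of Remark~\ref{rem:phi}(i) together with Proposition~\ref{prop:GalCompat} to get $\phi_{\bar{s}}\circ\sigma_{s_0}=\rho_B$ valued in $G_\Betti(\Zhat)$. Then MTC plus Bogomolov give $\Image(\rho_{B,\ell})$ open in $G_\Betti(\bbZ_\ell)$, which \emph{is} commensurable with $\Image(\phi_{\bar{s},\ell})$, so $s_0$ is $\ell$-Galois-generic for $\phi_{\bar{s}}$. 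Since $(G_\Betti,X)$ is of Hodge type, hence abelian type, Theorem~\ref{thm:CadKret} applies and yields adelic Galois-genericity; Corollary~\ref{cor:Im(phi)} applied to $(G_\Betti,X)$ (parts (i)--(ii) always, part (iii) under Hodge-maximality) then gives the conclusions. One smaller point: neither part of Corollary~\ref{cor:Im(phi)} gives the homotheties clause of Conjecture~\ref{conj:IMTC}; the paper handles this by quoting Wintenberger's theorem separately, whereas your cyclotomic-character sketch would need to be spelled out as an independent input rather than as a consequence of Galois-genericity.
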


Note that in this case the last part of Conjecture~\ref{conj:IMTC} says that the image of~$\rho_B$ contains an open subgroup of $\Zhat^* \cdot \id$. This is in fact a result proven by Wintenberger~\cite{WintenbLang}. (The result is stated in loc.\ cit.\ only for $k$ a number field; this implies the same result over finitely generated fields, as we can specialize~$B$ to an abelian variety over a number field in such a way that the Mumford-Tate group does not change.)

\begin{proof}
In proving the theorem, we may replace the ground field~$k$ with a finite extension and $B$ with an isogenous abelian variety. Hence we may assume that $B$ admits a principal polarization and that all $3$-torsion points of~$B$ are $k$-rational. This puts us in the situation of~\ref{ssec:Ag}. We retain the notation and choices introduced there. 

Via the chosen similitude $H \isomarrow \bbZ^{2g}$ we may view the Mumford-Tate group~$G_\Betti$ of~$B$ as an algebraic subgroup of~$\CSp_{2g,\bbQ}$. We take its Zariski closure $G_\Betti \subset \CSp_{2g}$ as integral model. (There is no need to introduce new notation for this integral form.) With $h_0 \in \mathfrak{H}_g^\pm$ as in~\ref{ssec:Ag}, let $X \subset \mathfrak{H}_g^\pm$ be the $G_\Betti(\bbR)$-orbit of~$h_0$. The pair $(G_\Betti,X)$ is a Shimura datum, and by construction we have a morphism $f \colon (G_\Betti,X) \to (\CSp_{2g},\mathfrak{H}_g^\pm)$. Let $K_0 = f^{-1}\bigl(K(3)\bigr)$, which is a neat compact open subgroup of~$G_\Betti(\Af)$, and, with $E$ the reflex field of $(G_\Betti,X)$, let $\Sh(f) \colon \Sh_{K_0}(G_\Betti,X) \to \cA_{g,3} \otimes E$ be the morphism induced by~$f$.

Let $\bar{s}_0 = [h_0,eK_0]$, which is a $\bbC$-valued point of $\Sh_{K_0}(G_\Betti,X)$ whose image under~$\Sh(f)$ is~$\bar{t}_0$. As in~\ref{ssec:ShimRepDef}, let $S_{0,\bbC} \subset \Sh_{K_0}(G_\Betti,X)_\bbC$ be the irreducible component containing~$\bar{s}_0$, let $F \subset \bbC$ be its field of definition, and let $S_0 \subset \Sh_{K_0}$ be the geometrically irreducible component thus obtained. Possibly after replacing~$k$ with a finite extension, $\bar{s}_0$ comes from a point $s_0 \in S_0(k)$, whose image in $\cA_{g,3}(k)$ is the point $t_0$ that corresponds to $(B,\lambda)$ equipped with a suitable level~$3$ structure. This point~$s_0$ gives rise to a homomorphism $\sigma_{s_0} \colon \Gal(\kbar/k) \to \pi_1(S_0,\bar{s}_0)$. With $\phi_{\bar{s}} \colon \pi_1(S_0,\bar{s}_0) \to K_0 \subset G_\Betti(\Zhat)$ the homomorphism~\eqref{eq:phi}, it follows from the functoriality explained in Remark~\ref{rem:phi} together with Proposition~\ref{prop:GalCompat} that $\phi_{\bar{s}} \circ \sigma_{s_0}$ is the Galois representation on the (full) Tate module of~$B$. 

Let $\ell$ be a prime number and $\phi_\ell \colon \pi_1(S_0,\bar{s}_0) \to  G_\Betti(\bbZ_\ell)$ the $\ell$-primary component of~$\phi$. If the Mumford-Tate conjecture for $B$ is true at~$\ell$, $s_0$ is $\ell$-Galois-generic with respect to the representation~$\phi$. (See the remark after~\ref{conj:IMTC}.) By Theorem~\ref{thm:CadKret}, $s_0$ is then Galois-generic, and in view of the description of~$\Image(\phi)$ given in Corollary~\ref{cor:Im(phi)}, this implies that Conjecture~\ref{conj:IMTC} and, if the Hodge structure~$H$ is maximal, Conjecture~\ref{conj:AMTC} are true for~$B$.
\end{proof}

\begin{remarks}
(\romannumeral1) There are many special classes of abelian varieties for which the Mumford-Tate conjecture is known. For a sample of such results, see for instance \cite{Pink}, Section~5, or the more recent~\cite{Lombardo} and the references contained therein. On the other hand, already for abelian varieties of dimension~$4$ the Mumford-Tate conjecture remains open.

(\romannumeral2) When we restrict our attention to the image of~$\rho$ intersected with the derived subgroup of~$G_\Betti$, a result related to our Theorem~\ref{thm:MainThmAV} was obtained by Hui and Larsen; see~\cite{HuiLars}, Theorem~4.2. Their work is based on a very different approach.
\end{remarks}

\begin{example}
As a first application we recover the result given in~\cite{SerreMotives}, 11.11. If $B$ is a $g$-dimensional abelian variety with $g$~odd (or $g=2$, or $g=4$) and $\End(B_\kbar) = \bbZ$, it is known that the Mumford-Tate conjecture for~$B$ is true and that the Mumford-Tate group is the full $\CSp_{2g}$. (See~\cite{Pink}, Theorem~5.14, for a more general result.) It is easily seen that the Shimura datum $(\CSp_{2g},\mathfrak{H}_g^\pm)$ is maximal; the conclusion therefore is that in this case the image of the representation~$\rho_B$ is open in $\CSp_{2g}(\Af)$.
\end{example}

Next we want to give examples of abelian varieties~$B$ over finitely generated subfields of~$\bbC$ for which the Hodge structure $H_1\bigl(B(\bbC),\bbQ\bigr)$ is not Hodge-maximal. Such examples of course also give us Shimura data of Hodge type that are not maximal. The first examples we discuss are of CM type; after that, we discuss an example in which the Mumford-Tate group is the almost direct product of the homotheties and a semisimple group.

\begin{example}
For our first construction, we start with a totally real field~$E_0$ of degree~$g$ over~$\bbQ$. Let $\sigma_1, \ldots, \sigma_g$ be the complex embeddings of~$E_0$. Let $k$ be an imaginary quadratic field. Then $E = k \cdot E_0$ is a CM field. Fix an embedding $\alpha \colon k \to \bbC$, and let $\tau_i$ ($i=1,\ldots,g$) be the complex embedding of~$E$ that extends~$\sigma_i$ and such that $\tau_i|_k = \alpha$. Thus, $T = \bigl\{\tau_1,\ldots,\tau_g,\bar\tau_1,\ldots,\bar\tau_g\bigr\}$ is the set of complex embeddings of~$E$. 

Consider the CM type $\Phi$ on~$E$ given by
\[
\Phi = \{\tau_1,\bar\tau_2,\ldots,\bar\tau_g\}\, .
\]
The pair $(E,\Phi)$ gives rise to an isogeny class of $g$-dimensional complex abelian varieties~$B$, determined by the rule that $H_1\bigl(B(\bbC),\bbQ\bigr) \cong E$ as a $\bbQ$-vector space, with Hodge decomposition of $H_1\bigl(B(\bbC),\bbC\bigr) \cong \oplus_{\tau \in T}\; \bbC^{(\tau)}$ given by the rule that $\bbC^{(\tau)}$ is of type $(-1,0)$ if $\tau \in \Phi$ and of type $(0,-1)$ otherwise. If $g > 1$ then $\Phi$ is a primitive CM type; in this case $B$ is simple. As any abelian variety of CM type, $B$ is defined over a number field, and by a result of Pohlmann~\cite{Pohlmann} the Mumford-Tate conjecture is true for~$B$.

As in Section~\ref{sec:AdelicRep}, if $F$ is a number field we simply write~$F^*$ for the torus $\Res_{F/\bbQ}\, \bbG_{\mult,F}$. Let $\Norm\colon E^* \to E_0^*$ be the norm homomorphism, and let $U \subset E^*$ be the subtorus given by $U = \Norm^{-1}(\bbQ^*)$. The cocharacter group $X_*(E^*)$ is the free $\bbZ$-module on the set~$T$. The cocharacter group of~$U$ is given by
\[
X_*(U) = \left\{\sum_{i=1}^g a_i \tau_i + \sum_{i=1}^g b_i \bar\tau_i \in X_*(E^*) \Biggm| \text{$a_i +b_i$ is independent of~$i$}\right\}\, .
\]
The elements $f_i = \tau_i - \bar\tau_i$ ($i=1,\ldots,g$) together with $f_{g+1} = \sum_{i=1}^g\, \bar\tau_i$ form a basis for~$X_*(U)$.

The cocharacter $\mu\colon \bbG_{\mult,\bbC} \to E^*$ corresponding to the Hodge structure $H_1\bigl(B(\bbC),\bbQ\bigr)$ is given by $\mu = \tau_1 + \bar\tau_2 + \cdots + \bar\tau_g = f_1 + f_{g+1}$. The Galois conjugates of~$\mu$ are the elements $f_i + f_{g+1}$ for $i=1,\ldots,g$ together with their complex conjugates $f_1 + \cdots + \hat{f}_i + \cdots + f_g + f_{g+1}$, for $i=1,\ldots,g$. These are cocharacters in $X_*(U)$, and for $g > 2$ they span a submodule of index $g-2$ in~$X_*(U)$. The conclusion, therefore, is that $U$ is the Mumford-Tate group of~$B$ if $g>2$, and that $H_1\bigl(B(\bbC),\bbQ\bigr)$ is not Hodge-maximal if $g>3$. In this last case, the image of the adelic Galois representation is therefore not open in the adelic points of the Mumford-Tate group.
\end{example}

\begin{example}
For our final example, we consider a Shimura datum $(G,X)$ such that
\begin{enumerate}[label=\textup{(\alph*)}]
\item $G$ is an inner form of a split group; 
\item $\pi_1(G)$ is non-cyclic.
\end{enumerate}

\noindent
Note that (b) holds if $G^\ab$ has dimension at least~$2$, or if $\dim(G^\ab) = 1$ and $G^\der$ is not simply-connected. (In the latter case this follows using \cite{Borovoi}, Cor.~1.7.) If (a) holds, $\Gal(\Qbar/\bbQ)$ acts trivially on~$\pi_1(G)$ (see~\cite{Borovoi}, Lemma~1.8), and by what was explained in \ref{ssec:pi1Borov} and~\ref{ssec:HmaxGX}, we conclude that any $(G,X)$ satisfying (a) and~(b) is non-maximal.

To obtain a concrete example, let $D$ be a quaternion algebra over~$\bbQ$ that is non-split at infinity, i.e., $D \otimes_\bbQ \bbR$ is Hamilton's quaternion algebra~$\bbH$. The canonical involution~$*$ on~$D$ is then a positive involution. Let $n = 2r$ be an even positive integer with $n \geq 6$, and consider a free (left-) $D$-module~$V$ of rank~$n$ equipped with a $(-1)$-hermitian form~$\Psi$ of discriminant~$1$ and Witt index~$r$. Let $G^\prime = \UU_D(V,\Psi)$ be the corresponding unitary group, which we view as an algebraic subgroup of~$\GL_\bbQ(V)$, and let $G \subset \GL_\bbQ(V)$ be the algebraic group generated by~$G^\prime$ together with the homotheties $\bbG_\mult \cdot \id$. The group $G^\prime \otimes \bbR$ is isomorphic to the identity component of~$\UU_n^*(\bbH)$ (which in some literature is denoted by~$\SO^*(2n)$), and there is a unique $G(\bbR)$-conjugacy class of homomorphisms $h\colon \bbS \to G_\bbR$ that make the pair $(G,X)$ a Shimura datum of PEL type. (Cf.\ \cite{DelShimura}, Section~1.3.) The corresponding Shimura variety parametrizes polarized abelian varieties of dimension~$2n$ with an action by (an order in)~$D$, which is of Albert Type~III. As $G^\prime$ is a $\bbQ$-simple group, $G$ is the generic Mumford-Tate group on~$X$.

Our assumption that $\Psi$ has trivial discriminant implies that the index of~$G^\prime$ is~${}^1\textrm{D}_n$; see \cite{Tits}, Table~II, pages 56--57. This means that $G^\prime$ (and hence also~$G$) is an inner form of the split form, i.e., condition~(a) is satisfied. On the other hand, $\dim(G^\ab) = 1$ and $G^\der$ is not simply-connected, so also (b) is satisfied. (In fact, $\pi_1(G) \cong \bbZ \times (\bbZ/2\bbZ)$.) We conclude that $(G,X)$ is not maximal. If $B$ is a complex abelian variety that corresponds to a Hodge-generic point of the Shimura variety defined by $(G,X)$, the Hodge structure $H_1\bigl(B(\bbC),\bbQ\bigr)$ is not Hodge-maximal.
\end{example}
\section{Application to K3 surfaces}
\label{sec:K3}

\subsection{}
\label{ssec:pi1Res}
Let $L$ be a number field, $G$ a connected reductive group over~$L$, and let $M = \Res_{L/\bbQ}\, G$. Then $M_\bbC \cong \prod_{\sigma \in \Sigma}\, G_\sigma$, where $\Sigma$ is the set of complex embeddings of~$L$ and $G_\sigma = G \otimes_{L,\sigma} \bbC$. With $\Gamma = \Gal(\Qbar/\bbQ)$ and $\Gamma_L = \Gal(\Qbar/L)$, the fundamental group~$\pi_1(M)$ is the $\Gamma$-module obtained from the $\Gamma_L$-module~$\pi_1(G)$ by induction.

Let $\mu$ be a complex cocharacter of~$M$. As in~\ref{ssec:pi1Borov}, its conjugacy class~$\cC$ defines an element $[\cC]\in \pi_1(M)$. Let $W \subset \pi_1(M)$ be the $\bbZ[\Gamma]$-submodule generated by~$[\cC]$.

Suppose there is a unique $\tau \in \Sigma$ such that the projection~$\mu_\tau$ of~$\mu$ onto the factor~$G_\tau$ is non-trivial. View~$L$ as a subfield of~$\bbC$ via~$\tau$; then $\mu_\tau$ is a complex cocharacter of~$G$. Its conjugacy class~$\cC_\tau$ defines an element $[\cC_\tau]$ in~$\pi_1(G)$. Let $W_\tau \subset \pi_1(G)$ be the $\bbZ[\Gamma_L]$-submodule that it generates. In this situation we have $W = \bbZ[\Gamma] \otimes_{\bbZ[\Gamma_L]} W_\tau$ as submodule of $\pi_1(M) = \bbZ[\Gamma] \otimes_{\bbZ[\Gamma_L]} \pi_1(G)$. Consequently, $\mu$ is maximal as a cocharacter of~$M$ if and only if $\mu_\tau$ is maximal as a complex cocharacter of~$G$.

\begin{proposition}
\label{prop:K3Maximal}
Let $V$ be a polarizable $\bbQ$-Hodge structure of K3 type, by which we mean that $V$ is of type $(-1,1) + (0,0) + (1,-1)$ with Hodge numbers $1$--$n$--$1$ for some~$n$. Then $V$ is Hodge-maximal.
\end{proposition}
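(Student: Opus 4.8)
The plan is to combine Zarhin's description of the Mumford--Tate group of a Hodge structure of K3 type with the reduction of~\ref{ssec:pi1Res} and an explicit computation of the Borovoi fundamental group of a special orthogonal group. Throughout I write $M \subset \GL(V)$ for the Mumford--Tate group and $\mu \colon \bbG_{\mult,\bbC} \to M_\bbC$ for the associated complex cocharacter.

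First I would reduce to the case that $V$ is irreducible. As $V$ is polarizable it decomposes as a direct sum of irreducible sub-Hodge structures; since $\dim V^{-1,1} = 1$, exactly one summand $V_1$ is of K3 type while the rest is a Hodge structure of type $(0,0)$, so that $M$ and~$\mu$ are unchanged if we replace $V$ by~$V_1$, and Hodge-maximality of~$V$ is equivalent to that of~$V_1$. Assume then $V$ irreducible. Then $E := \End_{\mathrm{Hdg}}(V)$ is a division algebra, and a polarization equips~$E$ with a positive involution; by a theorem of Zarhin on Hodge structures of K3 type, $E$ is in fact a \emph{field}, so this involution is the identity (and $E$ is totally real) or complex conjugation (and $E$ is a CM field with maximal totally real subfield~$E_0$). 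Regard $V$ as an $E$-vector space. The polarization form~$\psi$ is symmetric, and compatibility with the $E$-action via the positive involution yields a form~$\phi$ valued in~$E$, symmetric $E$-bilinear in the totally real case and Hermitian relative to $E/E_0$ in the CM case, with $\psi = \mathrm{Tr}_{E/\bbQ}\circ\phi$. Since $M$ commutes with~$E$ and preserves~$\psi$, it preserves~$\phi$, so $M \subset \Res_{E/\bbQ}\SO(V,\phi)$ (totally real case) resp.\ $M \subset \Res_{E_0/\bbQ}\mathrm{U}_{E/E_0}(V,\phi)$ (CM case). Zarhin's computation of~$M$ moreover shows that one of the following holds: (i) $M$ is a torus; (ii) $E$ is CM, $\dim_E V \geq 2$, and $M = \Res_{E_0/\bbQ}\mathrm{U}_{E/E_0}(V,\phi)$; (iii) $E$ is totally real, $\dim_E V \geq 3$, and $M = \Res_{E/\bbQ}\SO(V,\phi)$ (the cases $\dim_E V \leq 2$ here are impossible, as they would force $M$ to be a torus with $\End_{\mathrm{Hdg}}(V)$ strictly larger than~$E$).

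Next I would locate~$\mu$: as $E$ preserves the line $V^{-1,1}$, there is a unique embedding $\sigma_0 \colon E \hookrightarrow \bbC$ with $V^{-1,1} \subset V \otimes_{E,\sigma_0}\bbC$, whence $V^{1,-1} \subset V\otimes_{E,\overline{\sigma_0}}\bbC$ and all other factors of $V\otimes_\bbQ\bbC$ over the remaining embeddings of~$E$ are purely of type $(0,0)$. In case~(i), $\pi_1(M) = X_*(M)$, and by minimality of the Mumford--Tate torus~$M$ no proper $\bbQ$-subtorus of~$M$ contains the image of $h \colon \bbS \to M_\bbR$, so $X_*(M) = \bbZ[\Gamma]\cdot\mu$; thus $[\cC]$ generates $\pi_1(M)$ and $V$ is Hodge-maximal by the criterion recalled in~\ref{ssec:HmaxGX}. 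In case~(ii), the group $\mathrm{SU}_{E/E_0}(V,\phi)$, being the special unitary group of a Hermitian form, is simply connected, hence so is $M^\der = \Res_{E_0/\bbQ}\mathrm{SU}_{E/E_0}(V,\phi)$; the exact sequence of Borovoi fundamental groups $\pi_1(M^\der) \to \pi_1(M) \to X_*(M^\ab) \to 0$ attached to $1 \to M^\der \to M \to M^\ab \to 1$ therefore gives an isomorphism $\pi_1(M) \isomarrow X_*(M^\ab)$ carrying $[\cC]$ to the image~$\bar\mu$ of~$\mu$. Again by minimality of~$M$, the torus $M^\ab$ is generated by the image of~$h$, so $X_*(M^\ab) = \bbZ[\Gamma]\cdot\bar\mu$; hence $[\cC]$ generates $\pi_1(M)$ and $V$ is Hodge-maximal.

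There remains case~(iii): $M = \Res_{E/\bbQ}\SO(V,\phi)$ with $N := \dim_E V \geq 3$. By the previous paragraph, $\mu$ is a cocharacter of $M_\bbC = \prod_{\sigma}\SO\bigl(V\otimes_{E,\sigma}\bbC,\phi_\sigma\bigr)$ that is nontrivial only in the factor at~$\sigma_0$, so the criterion of~\ref{ssec:pi1Res} (with $L = E$, $G = \SO(V,\phi)$, $\tau = \sigma_0$) applies: $\mu$ is maximal as a cocharacter of~$M$ if and only if its component $\mu_{\sigma_0}$ is maximal as a complex cocharacter of~$\SO(V,\phi)$. Now $V^{-1,1}$ and $V^{1,-1}$ are complementary $\phi_{\sigma_0}$-isotropic lines on which $\mu_{\sigma_0}$ acts by~$z^{\pm1}$ while fixing their orthogonal complement; in standard coordinates on the cocharacter lattice of a maximal torus of~$\SO_N$ this is the basis vector $e_1^\vee$, i.e.\ $z\mapsto\diag(z,1,\dots,1,z^{-1})$. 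A direct computation with the root datum of~$\SO_N$ shows that, for $N \geq 3$, its coroot lattice is $\{\sum_i a_i e_i^\vee : \sum_i a_i \text{ even}\}$, so that $\pi_1(\SO_N) = X_*/Q^\vee \cong \bbZ/2\bbZ$ and $e_1^\vee$ maps to its nonzero element. Hence $[\cC_{\sigma_0}]$ generates $\pi_1(\SO_N)$ already as an abelian group, a fortiori as a $\bbZ[\Gamma_E]$-module, so $\mu_{\sigma_0}$ is maximal and $V$ is Hodge-maximal. The step I expect to require the most care is the invocation of Zarhin's structure result in the degenerate ranges --- in particular $\dim_E V = 4$ in the totally real case, where $\SO_4$ is not simple and one must genuinely use $\End_{\mathrm{Hdg}}(V) = E$ to prevent~$M$ from dropping to a proper normal subgroup; in the CM case it is enough that $M^\der$ be the full (simply connected) special unitary group, so one may be less precise about the central torus of~$M$.
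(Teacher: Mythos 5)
Your overall strategy follows the paper's: reduce to $V$ simple, invoke Zarhin's theorem to identify $E = \End_\QHS(V)$ as totally real or CM with $M$ a restriction of scalars of a special orthogonal or unitary group, pass to a single factor via~\ref{ssec:pi1Res}, and compute the class of $\mu$ in Borovoi's $\pi_1$. Your case~(iii) is correct, and in fact uniform over all $N \geq 3$; this is a bit cleaner than the paper, whose dismissal of the odd case on the grounds that $M$ is simply connected is a slip ($\pi_1(\SO_{2l+1}) \cong \bbZ/2\bbZ$ for $l\geq 1$), repaired by exactly your observation that $e_1^\vee$ hits the non-trivial class.

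Cases~(i) and~(ii), however, contain a real gap. You deduce from minimality of the Mumford--Tate torus $M$ (resp.\ $M^\ab$) that $X_*(M) = \bbZ[\Gamma]\cdot\mu$ (resp.\ $X_*(M^\ab) = \bbZ[\Gamma]\cdot\bar\mu$). But minimality says only that the image of the cocharacter lies in no proper $\bbQ$-subtorus, equivalently that $\bbZ[\Gamma]\cdot\mu$ has finite index in $X_*(M)$, equivalently that it is not contained in a proper $\Gamma$-stable \emph{saturated} sublattice; it does not give equality of lattices. The difference between these two conditions is precisely what Hodge-maximality measures, and the paper's own CM example in Section~\ref{sec:AV} shows your inference fails in general: there $U$ \emph{is} the Mumford--Tate group, so the cocharacter is minimal in your sense, yet $\bbZ[\Gamma]\cdot\mu$ has index $g-2$ in $X_*(U)$ and $H_1\bigl(B(\bbC),\bbQ\bigr)$ fails to be Hodge-maximal for $g>3$. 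To close the gap in case~(ii) you must, as the paper does, compute the class of the explicit cocharacter: identify $G_\bbC \cong \GL_n$ so that $\mu_\tau$ is $z \mapsto \diag(z,1,\dots,1)$, whose image in $\pi_1(\GL_n) \cong \bbZ$ is a generator. Your reduction to $X_*(M^\ab)$ via the simple connectedness of $\mathrm{SU}_n$ and the exact sequence $0 \to \pi_1(M^\der) \to \pi_1(M) \to X_*(M^\ab) \to 0$ is sound, and amounts to applying the determinant to that cocharacter; you just cannot replace the computation by an appeal to minimality. Case~(i) then follows as the instance $n=1$, or by checking directly that the $\Gamma$-orbit of $\sigma_0 - \bar\sigma_0$ spans the sublattice $\{\sum a_\tau\tau \in X_*(E^*) : a_\tau + a_{\bar\tau} = 0 \text{ for all }\tau\}$, which is $X_*(M)$.
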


\begin{proof}
Without loss of generality we may assume that $V$ is simple as a Hodge structure, which in this case means that there are no non-zero Hodge classes in~$V$. (By definition, Hodge-maximality only depends on the Mumford-Tate group~$M$ and the defining homomorphism $h \colon \bbS \to M_\bbR$; these do not change if we replace $V$ with $V \oplus \bbQ(0)$.) Let $L = \End_\QHS(V)$ be the endomorphism algebra of~$V$ as a $\bbQ$-Hodge structure, and choose a polarization form $\psi \colon V \times V \to \bbQ$. As shown by Zarhin in~\cite{ZarhinK3}, $L$ is a field which is either totally real or a CM field. 

First suppose $L$ is totally real. By \cite{vGeemen}, Lemma~3.2, $\dim_L(V) \geq 3$. By \cite{ZarhinK3}, Theorem~2.2.1, $M = \Res_{L/\bbQ}\, \SO_L(V,\Psi)$, where $\Psi\colon V \times V \to L$ is the unique symmetric $L$-bilinear form on~$V$ such that $\trace_{L/\bbQ} \circ \Psi = \psi$. In particular, $M$ is semisimple. If $\dim_L(V)$ is odd, $M$ is simply connected and there is nothing to prove. Next suppose $\dim_L(V) = 2l$ is even. Let $\Sigma$ be the set of complex embeddings of~$L$. Write $G = \SO_L(V,\Psi)$ and let $\mu \colon \bbG_{\mult,\bbC} \to M_\bbC \cong \prod_{\sigma \in \Sigma} G_\sigma$ be the cocharacter that gives the Hodge structure. There is a unique $\tau \in \Sigma$ such that $\mu_\tau \neq 1$, so we are in the situation of~\ref{ssec:pi1Res}. The root system of~$G_\bbC$ is of type~$\mathrm{D}_l$, and we follow the notation of~\cite{BourbLie}, Planche~\textrm{IV}. Note that the root system in this case is self-dual; further, the calculation that follows goes through without changes if $l=2$. With respect to the basis $\epsilon_1,\ldots,\epsilon_l$ for $\bbR^l = X_*(G) \otimes \bbR$, we have $X_*(G) = \bbZ^l$, and the coroot lattice $Q(R^\vee)$ consists of the vectors $(m_1,\ldots,m_l) \in \bbZ^l$ for which $\sum m_j$ is even. On the other hand, the cocharacter~$\mu_\tau$ corresponds to the vector $(1,0,\ldots,0)$; its image in $\pi_1(G) = X_*(G)/Q(R^\vee) \cong \bbZ/2\bbZ$ is therefore the non-trivial class. By what was explained in~\ref{ssec:pi1Borov} this implies the assertion.

Next suppose $L$ is a CM field. Let $L_0 \subset L$ be the totally real subfield. There is a unique symmetric hermitian form $\Psi\colon V \times V \to L$ (with respect to complex conjugation on~$L$) such that $\trace_{L/\bbQ} \circ \Psi = \psi$, and by \cite{ZarhinK3}, Theorem~2.3.1, $M = \Res_{L_0/\bbQ}\, \UU_L(V,\Psi)$. Write $G = \UU_L(V,\Psi)$, and let $\Sigma$ be the set of complex embeddings of~$L_0$. As in the totally real case there is a unique $\tau \in \Sigma$ such that the cocharacter~$\mu$ is non-trivial on the factor~$G_\tau$. If $n = \dim_L(V)$, we have $G_\bbC \cong \GL_n$ in such a way that $\mu_\tau$ is conjugate to the cocharacter $\bbG_{\mult} \to \GL_n$ given by $z \mapsto \diag(z,1,\ldots,1)$. It is straightforward to check that the corresponding class in $\pi_1(\GL_n) \cong \bbZ$ is a generator, and again by~\ref{ssec:pi1Borov} and~\ref{ssec:pi1Res} this implies the assertion.
\end{proof}

\begin{remark}
In the proposition it is essential that we work with a Hodge structure of weight~$0$. As is well-known, if $Y$ is a complex K3 surface, the Hodge structure $H = H^2_\prim\bigl(Y(\bbC),\bbQ\bigr)$ is not, in general, Hodge-maximal; but $H(1) = H \otimes \bbQ(1)$ is. For instance, if $\End_\QHS(H) = \bbQ$, the Mumford-Tate group of~$H$ is the group $\GO(H,\phi)$ of orthogonal similitudes, where $\phi$ is a polarization form. We have a non-trivial isogeny $\CSpin(H,\phi) \to \GO(H,\phi)$, such that the homomorphism $h \colon \bbS \to \GO(H,\phi)_\bbR$ that defines the Hodge structure on~$H$ lifts to a homomorphism $\tilde{h} \colon \bbS \to \CSpin(H,\phi)_\bbR$. Cf.~\cite{WintenbRelev}, 2.2.3--4. By contrast, the Mumford-Tate group of $H(1)$ is the special orthogonal group $\SO(H,\phi)$. We can still lift to $\CSpin(H,\phi)$, but the homomorphism $\CSpin(H,\phi) \to \SO(H,\phi)$ is not an isogeny.
\end{remark}

\subsection{}
\label{ssec:ModuliK3}
As a preparation for the main result of this section, we need to recall some facts about the moduli of polarized K3 surfaces. We closely follow Rizov~\cite{RizovModuli},~\cite{RizovKS}.

Fix a natural number~$d$. Let $(L_0,\psi)$ be the quadratic lattice $U^{\oplus 3} \oplus E_8^{\oplus 2}$ (with $U$ the hyperbolic lattice). With $\{e_1,f_1\}$ the standard basis of the first copy of~$U$, let $(L_{2d},\psi_{2d})$ be the sublattice $\langle e_1+df_1\rangle \oplus U^{\oplus 2} \oplus E_8^{\oplus 2}$ of~$L_0$. In what follows we write $\SO$ for the $\bbZ$-group scheme $\SO(L_{2d},\psi_{2d})$. For $n \geq 1$, let $K(n) \subset \SO(\Af)$ be the compact open subgroup of elements in~$\SO(\Zhat)$ that reduce to the identity modulo~$n$. If $K$ is an open subgroup of~$K(n)$ for some $n\geq 3$, Rizov defines in \cite{RizovModuli}, Section~6, a moduli stack $\cF_{2d,K}$ over~$\bbQ$ of K3 surfaces with a primitive polarization of degree~$2d$ and a level~$K$ structure. (In fact, Rizov does this over open parts of $\Spec(\bbZ)$, but for our purposes it suffices to work over~$\bbQ$.) By \cite{RizovKS}, Cor.~2.4.3, $\cF_{2d,K}$ is a scheme. If $(Y,\lambda)$ is a K3 surface over a field~$k$ of characteristic~$0$ equipped with a primitive polarization of degree~$2d$, a level~$K(n)$-structure on $(Y,\lambda)$ is an isometry $H^2_\prim(Y_\kbar,\bbZ/n\bbZ)\bigl(1\bigr) \isomarrow L_{2d}/nL_{2d}$. (See \cite{RizovModuli}, Example~5.1.3.) 

The construction of~\ref{ssec:ShimRepDef} has an analogue in this setting. Let $\cF_{0,\bbC}$ be an irreducible component of $\cF_{2d,K(3)} \otimes \bbC$, and let $F \subset \bbC$ be its field of definition, so that we have a geometrically irreducible component $\cF_0 \subset \cF_{2d,K(3)} \otimes F$. For $K \subset K(3)$ we have an \'etale morphism $\cF_{K,K(3)} \colon \cF_{2d,K} \to \cF_{2d,K(3)}$, which for $K$ normal in~$K(3)$ is Galois with group $K(3)/K$. Let $\cF_K \subset \cF_{2d,K} \otimes F$ be the inverse image of~$\cF_0$. Suppose we are given a compatible collection $\bar{y} = (\bar{y}_K)$ of geometric points of the~$\cF_K$, for $K$ open and normal in~$K(3)$. We write $\bar{y}_0$ for~$\bar{y}_{K(3)}$. We then have and associated homomorphism
\begin{equation}
\label{eq:PhiK3}
\Phi_{\bar{y}} \colon \pi_1(\cF_0,\bar{y}_0) \to K(3) \subset \SO(\Zhat)\, .
\end{equation}

\subsection{}
\label{ssec:K3Shim}
With $\SO$ as above, let $\Omega^\pm$ be the space of homomorphisms $h \colon \bbS \to \SO_\bbR$ that give $L_{2d} \otimes \bbQ$ a Hodge structure of type $(-1,1)+(0,0)+(1,-1)$ with Hodge numbers $1$--$19$--$1$, such that~$\pm \psi_{2d}$ is a polarization. The group $\SO(\bbR)$ acts transitively on~$\Omega^\pm$, and the pair $(\SO_\bbQ,\Omega^\pm)$ is a Shimura datum of abelian type with reflex field~$\bbQ$.

On of the main results of~\cite{RizovKS} (loc.\ cit., Thm.~3.9.1) is that for an open subgroup $K \subset K(3)$ there is an \'etale morphism of $\bbQ$-schemes
\begin{equation}
\label{eq:PeriodMap}
j_K \colon \cF_{2d,K} \to \Sh_K(\SO_\bbQ,\Omega^\pm)
\end{equation}
in such a way that for $K_2 \subset K_1$ the diagram
\begin{equation}
\label{eq:FtoSh}
\begin{tikzpicture}[baseline=(current bounding box.center)]
\node (A) at (0,1.8) {$\cF_{2d,K_2}$};
\node (B) at (4,1.8) {$\Sh_{K_2}(\SO_\bbQ,\Omega^\pm)$};
\node (C) at (0,0) {$\cF_{2d,K_1}$};
\node (D) at (4,0) {$\Sh_{K_1}(\SO_\bbQ,\Omega^\pm)$};
\draw[->] (A) -- (C) node[pos=.5,left] {$\scriptstyle \cF_{K_2,K_1}$};
\draw[->] (A) -- (B) node[pos=.5,above] {$\scriptstyle j_{K_2}$};
\draw[->] (C) -- (D) node[pos=.5,above] {$\scriptstyle j_{K_1}$};
\draw[->] (B) -- (D) node[pos=.5,right] {$\scriptstyle \Sh_{K_2,K_1}$};
\end{tikzpicture}
\end{equation}
is cartesian. The image of~$j_K$ is the complement of a divisor (ibid., 3.10(B)).

\begin{theorem}
\label{thm:K3}
Let $Y$ be a K3 surface over a subfield $k \subset \bbC$ that is finitely generated over~$\bbQ$. Let $H= H^2\bigl(Y(\bbC),\bbZ\bigr)(1)$, and let $G_\Betti \subset \GL(H)$ be the Mumford-Tate group. Let $\rho_Y \colon \Gal(\kbar/k) \to \GL(H)\bigl(\Zhat\bigr)$ be the Galois representation on $\hat{H} = H^2\bigl(Y_\kbar,\Zhat\bigr)(1)$, which we identify with $H \otimes \Zhat$ via the comparison isomorphism between singular and \'etale cohomology. Then the image of~$\rho_Y$ has a subgroup of finite index which is an open subgroup of~$G_\Betti(\Zhat)$.
\end{theorem}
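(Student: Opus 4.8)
The plan is to follow the strategy used for Theorem~\ref{thm:MainThmAV}, with Rizov's moduli space of polarized K3 surfaces and its period map to an orthogonal Shimura variety playing the role of the Siegel moduli space. Since passing from $k$ to a finite extension $k'$ replaces $\Image(\rho_Y)$ by a subgroup of finite index, and since the conclusion of the theorem propagates from $k'$ back to~$k$, I would first replace $k$ by a finite extension so that: $\Image(\rho_Y)\subseteq G_\Betti(\Zhat)$ (possible by the result recalled in the Introduction); $G_{\ell,Y_k}$ is connected for all~$\ell$; $Y$ carries a primitive polarization~$\lambda$ of some degree~$2d$ defined over~$k$; and $(Y,\lambda)$ carries a $k$-rational level-$K(3)$ structure. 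This places us in the situation of~\ref{ssec:ModuliK3}, with $Y$ giving a point $y_0\in\cF_0(k)$, after also enlarging~$k$ by the field of definition of the geometrically irreducible component~$\cF_0$. Writing $H_\bbQ=H^2(Y(\bbC),\bbQ)(1)=\mathrm{NS}_\bbQ(1)\oplus T$ as a sum of Hodge structures, the group $G_\Betti$ acts trivially on $\mathrm{NS}_\bbQ(1)$, fixes~$\lambda$, and preserves the cup-product form, so it is a $\bbQ$-subgroup of $\SO\bigl(H^2_\prim(Y(\bbC),\bbQ)(1)\bigr)$; using the level structure to identify this quadratic space with $L_{2d}\otimes\bbQ$ and taking Zariski closures, I view $G_\Betti$ as a $\bbZ$-subgroup scheme of the group~$\SO$ of~\ref{ssec:ModuliK3}. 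With $h_0\colon\bbS\to G_{\Betti,\bbR}$ the cocharacter giving the Hodge structure of~$Y$ and $X$ its $G_\Betti(\bbR)$-conjugacy class, we have $X\subseteq\Omega^\pm$, and $(G_\Betti,X)$ is a Shimura datum, with $G_\Betti$ the generic Mumford-Tate group on~$X$, equipped with a morphism $f\colon(G_\Betti,X)\to(\SO_\bbQ,\Omega^\pm)$. I would then check that $(G_\Betti,X)$ is both \emph{maximal} and \emph{of abelian type}: maximality follows from Proposition~\ref{prop:K3Maximal} (which gives that $h_0$ is maximal in the sense of Definition~\ref{def:Maximal}) together with the remark in~\ref{ssec:HmaxGX} that a maximal $h_0\in X^\hgen$ forces maximality of the datum; abelian type follows from Zarhin's structure theorems (\cite{ZarhinK3}, as used in the proof of Proposition~\ref{prop:K3Maximal}), according to which $G_\Betti$ is, up to its trivial factor on $\mathrm{NS}_\bbQ(1)$, of the form $\Res_{L/\bbQ}\SO_L(T,\Psi)$ or $\Res_{L_0/\bbQ}\UU_L(T,\Psi)$, whose adjoint datum is a product of orthogonal and unitary data.

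Next I would produce the relevant point and representation. Set $K_0=f^{-1}\bigl(K(3)\bigr)$, a neat compact open subgroup of $G_\Betti(\Af)$, and let $\phi=\phi_{\bar s}\colon\pi_1(S_0)\to K_0\subseteq G_\Betti(\Zhat)$ be the representation~\eqref{eq:phi} attached to a geometrically irreducible component $S_0\subseteq\Sh_{K_0}(G_\Betti,X)_F$ through a base point lying over $[h_0,eK(3)]$. Rizov's period map~\eqref{eq:PeriodMap} is \'etale, and by construction the level structures on both sides match, so the representation $\Phi_{\bar y}$ of~\eqref{eq:PhiK3} is the pullback along the period map of the Shimura-variety representation of $(\SO_\bbQ,\Omega^\pm)$; moreover $\Phi_{\bar y}\circ\sigma_{y_0}$ is the Galois action on $H^2_\prim(Y_\kbar,\Zhat)(1)$ (the analogue for K3 surfaces of Proposition~\ref{prop:GalCompat}). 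After a further finite extension of~$k$, the point $y_0$ lifts to a point $s_0\in S_0(k)$: as a $\kbar$-point the lift exists because $h_0$ factors through~$G_{\Betti,\bbR}$, and it is $k$-rational because the Mumford-Tate conjecture for K3 surfaces (Tankeev~\cite{TankK3}, Andr\'e~\cite{AndreK3}) — which after our reduction gives $\Image(\rho_\ell)\subseteq G_\Betti(\bbZ_\ell)$ for all~$\ell$ — makes the reduction of structure to~$G_\Betti$ Galois-equivariant. Applying functoriality of~$\phi$ (Remark~\ref{rem:phi}(\romannumeral1)) to~$f$, and using that $f\colon G_\Betti\hookrightarrow\SO$ and $G_\Betti\hookrightarrow\GL\bigl(H^2_\prim(Y(\bbC),\bbQ)(1)\bigr)$ are closed immersions, identifies $\phi\circ\sigma_{s_0}$ with~$\rho_Y$, regarded as a homomorphism into $G_\Betti(\Zhat)$, up to conjugacy.

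To conclude, fix a prime~$\ell$. By the Mumford-Tate conjecture for K3 surfaces (with $k=k^\conn$), $\Image(\rho_\ell)$ is an open subgroup of $G_\Betti(\bbZ_\ell)$, hence open in $G_\Betti(\bbQ_\ell)$; by Corollary~\ref{cor:Im(phi)}(\romannumeral1) so is $\Image(\phi_\ell)$, and since $\Image(\phi_\ell\circ\sigma_{s_0})=\Image(\rho_\ell)$ this shows $s_0$ is $\ell$-Galois-generic with respect to~$\phi$. As $(G_\Betti,X)$ is of abelian type, Theorem~\ref{thm:CadKret} shows $s_0$ is Galois-generic, so $\Image(\rho_Y)=\Image(\phi\circ\sigma_{s_0})$ is open in $\Image(\phi)$; and as $(G_\Betti,X)$ is maximal, Corollary~\ref{cor:Im(phi)}(\romannumeral3) shows $\Image(\phi)$ is open in $G_\Betti(\Af)$. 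Therefore $\Image(\rho_Y)$ is open in $G_\Betti(\Af)$, and being contained in $G_\Betti(\Zhat)$ it is open in $G_\Betti(\Zhat)$; propagating back to the original ground field as at the start of the argument completes the proof.

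I expect the main obstacle to be the second step: transporting the machinery of Section~\ref{sec:AdelicRep} from the orthogonal Shimura variety $\Sh(\SO_\bbQ,\Omega^\pm)$, which is what Rizov's period map sees directly, to the sub-Shimura datum $(G_\Betti,X)$ cut out by~$Y$ — in particular producing the $k$-rational point $s_0$ on the sub-Shimura variety and verifying $\phi\circ\sigma_{s_0}\cong\rho_Y$. Everything here runs in close parallel with the abelian-variety case of Section~\ref{sec:AV}, but one must keep careful track of the interplay between $H^2$ and $H^2_\prim$, of the several auxiliary finite extensions of the base field, and of the choice of components and compatible base points.
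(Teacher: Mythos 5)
Your proposal is correct and follows essentially the same approach as the paper's proof: reduce to a suitable finite extension of~$k$, place $(Y,\lambda)$ in Rizov's moduli scheme, use the period map~\eqref{eq:PeriodMap} to pass to the orthogonal Shimura variety, cut out the Shimura subdatum $(G_\Betti,X)$, verify maximality via Proposition~\ref{prop:K3Maximal} and~\ref{ssec:HmaxGX}, and combine Theorem~\ref{thm:CadKret} with Corollary~\ref{cor:Im(phi)}(\romannumeral3). The only notable difference is that you make explicit the verification that $(G_\Betti,X)$ is of abelian type (via Zarhin's classification of Mumford--Tate groups of K3-type Hodge structures), a hypothesis of Theorem~\ref{thm:CadKret} that the paper leaves implicit.
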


\begin{proof}
The proof is very similar to that of Theorem~\ref{thm:MainThmAV}. The main difference is that for K3 surfaces the Mumford-Tate conjecture is known, due to results of Tankeev~\cite{TankK3} and Andr\'e~\cite{AndreK3}, and that by Proposition~\ref{prop:K3Maximal} the Hodge structure on~$H$ is always Hodge-maximal.

We retain the notation introduced in \ref{ssec:ModuliK3} and~\ref{ssec:K3Shim}. Choose a primitive polarization~$\lambda$ on~$Y$, say of degree~$2d$. Further choose an isometry $i \colon H \isomarrow L_{2d}$. These choices give us a compatible system $\bar{y} = (\bar{y}_K)$ of points $\bar{y}_K \in \cF_{2d,K}(\bbC)$, where $K$ runs through the set of open subgroups of~$K(3)$. Possibly after replacing~$k$ with a finite extension in~$\bbC$, we may assume that $k=k^\conn$ and that $\bar{y}_0 = \bar{y}_{K(3)}$ comes from a $k$-valued point $y_0 \in \cF_{2d,K(3)}(k)$ by composing it with the embedding $k \hookrightarrow \bbC$. Of course, $y_0$ is just the moduli point of $(Y,\lambda)$ equipped with a suitable level~$3$ structure.

Via the chosen isometry~$i$ the Hodge structure on~$H_\bbQ$ defines a point $h_0 \in \Omega^\pm$. Let $\bar{t} = (\bar{t}_K)$ be the system of $\bbC$-valued points $[h_0,eK]$ of $\Sh_K(\SO,\Omega^\pm)$, and abbreviate $\bar{t}_{K(3)}$ to~$\bar{t}_0$. The construction of the period map~\eqref{eq:PeriodMap} is such that $j_K(\bar{y}_K) = \bar{t}_K$ for all $K \subset K(3)$.

Let $t_0 = j_{K(3)}(y_0)$, which is a $k$-valued point of $\Sh_{K(3)}(\SO,\Omega^\pm)$. Let $\cF_0 \subset \cF_{2d,K(3)} \otimes k$ and $\cS_0 \subset \Sh_{K(3)}(\SO,\Omega^\pm) \otimes k$ be the irreducible components containing~$\bar{y}_0$ and~$\bar{t}_0$, respectively; as they are smooth over~$k$ and have a $k$-rational point, these components are geometrically irreducible. By construction, $j_0 = j_{K(3)}$ restricts to an \'etale morphism $j_0 \colon \cF_0 \to \cS_0$ over~$k$.

Consider the homomorphism $\phi_{\bar{t}} \colon \pi_1(\cS_0,\bar{t}_0) \to K(3)$ as in~\ref{ssec:ShimRepDef}. We also have the homomorphism $\Phi_{\bar{y}} \colon \pi_1(\cF_0,\bar{t}_0) \to K(3)$ of~\eqref{eq:PhiK3}. (In both cases we have now extended the base field to~$k$.) The fact that the diagrams~\eqref{eq:FtoSh} are Cartesian implies that $\Phi_{\bar{y}} = \phi_{\bar{t}} \circ j_{0,*}$. 

Let $H_\prim \subset H$ be the primitive integral cohomology, and identify the primitive \'etale cohomology with $\Zhat$-coefficients $\hat{H}_\prim \subset \hat{H}$ with $H_\prim \otimes \Zhat$. Via the chosen isometry~$i$, the Galois action on~$\hat{H}_\prim$ is a representation $\rho_{Y,\prim} \colon \Gal(\kbar/k) \to \SO(\Zhat)$. Note that the Galois action~$\rho_Y$ on~$\hat{H}$ leaves $\hat{H}_\prim$ stable and is trivial on the complement; hence the image of~$\rho_Y$ is the same as the image of~$\rho_{Y,\prim}$. On the other hand, the $k$-rational point~$y_0$ gives rise to a section~$\sigma_{y_0}$ of the homomorphism $\pi_1(\cF_0,\bar{y}_0) \to \Gal(\kbar/k)$ induced by the structural morphism $\cF_0 \to \Spec(k)$. The composition $\Phi_{\bar{y}} \circ \sigma_{y_0} \colon \Gal(\kbar/k) \to K(3) \subset \SO(\Zhat)$ is the same as~$\rho_{Y,\prim}$. The composition $j_{0,*} \circ \sigma_{y_0} \colon \Gal(\kbar/k) \to \pi_1(\cS_0,\bar{t}_0)$ is the section~$\sigma_{t_0}$ given by the point $t_0 \in \cS_0(k)$. It follows that $\phi_{\bar{t}} \circ \sigma_{t_0} \colon \Gal(\kbar/k) \to \SO(\Zhat)$ is the same as~$\rho_{Y,\prim}$.

The rest is the same as in the proof of Theorem~\ref{thm:MainThmAV}. Let $G_\Betti \subset \CSp_{2g}$ be the Mumford-Tate group, and let $X \subset \Omega^\pm$ be the $G_\Betti(\bbR)$-orbit of~$h_0$. The pair $(G_\Betti,X)$ is a Shimura datum and we have a morphism $f \colon (G_\Betti,X) \to (\SO,\Omega^\pm)$. Let $K_0 = f^{-1}\bigl(K(3)\bigr)$, let $E$ be the reflex field, and let $\Sh(f) \colon \Sh_{K_0}(G_\Betti,X) \to \Sh_{K(3)}(\SO,\Omega^\pm) \otimes E$ be the morphism induced by~$f$. We have a compatible system~$\bar{s}$ of points $\bar{s}_K = [h_0,eK] \in \Sh_K(G_\Betti,X)\bigl(\bbC\bigr)$ with $\Sh(f)\bigl(\bar{s}\bigr) = \bar{t}$. The point $\bar{s}_0 = \bar{s}_{K_0}$ comes from a $k$-valued point~$s_0$, and if $S_0 \subset \Sh_{K_0}(G_\Betti,X)_k$ is the irreducible component in which it lies, $s_0$ gives a section $\sigma_{s_0} \colon \Gal(\kbar/k) \to \pi_1(S_0,\bar{s}_0)$. Finally, if $\phi_{\bar{s}} \colon \pi_1(S_0,\bar{s}_0) \to K_0 \subset G_\Betti(\Zhat)$ is the representation~\eqref{eq:phi}, it follows from the functoriality explained in Remark~\ref{rem:phi} that $\phi_{\bar{s}} \circ \sigma_{s_0}$ is the same as~$\rho_{Y,\prim}$.

By the Mumford-Tate conjecture, $s_0$ is $\ell$-Galois-generic with respect to~$\phi_{\bar{s}}$ for every~$\ell$. By Theorem~\ref{thm:CadKret} it follows that $s_0$ is Galois-generic, and by Corollary~\ref{cor:Im(phi)}(\romannumeral3), the theorem follows.
\end{proof}

{\small

\bigskip

} 

\noindent
\texttt{anna.cadoret@math.polytechnique.fr}

\noindent
Centre de Math\'ematiques Laurent Schwartz -- \'Ecole Polytechnique, 91128 Palaiseau, France
\medskip

\noindent
\texttt{b.moonen@science.ru.nl}

\noindent
Radboud University Nijmegen, IMAPP, Nijmegen, The Netherlands

\end{document}